\tikzset{
	>=stealth',
	help lines/.style={dashed, thick}, axis/.style={<->}, important
	line/.style={thick}, connection/.style={thick, dotted}, }
\numberwithin{equation}{section}
\numberwithin{figure}{section}
\theoremstyle{plain}
\newtheorem{thm}{\protect\theoremname}[section]
\newtheorem{thm}{\protect\theoremname}[section]
\theoremstyle{definition}
\newtheorem{defn}[thm]{\protect\definitionname}
\theoremstyle{definition}
\newtheorem*{defn*}{\protect\definitionname}
\theoremstyle{plain}
\newtheorem{lem}[thm]{\protect\lemmaname}
\theoremstyle{plain}
\newtheorem*{thm*}{\protect\theoremname}
\theoremstyle{remark}
\newtheorem{rem}[thm]{\protect\remarkname}
\theoremstyle{plain}
\newtheorem{conjecture}[thm]{\protect\conjecturename}
\theoremstyle{plain}
\newtheorem{prop}[thm]{\protect\propositionname}
\theoremstyle{remark}
\newtheorem{claim}[thm]{\protect\claimname}
\theoremstyle{plain}
  \newtheorem{cor}[thm]{\protect\corollaryname}
\providecommand{\corollaryname}{Corollary}
\providecommand{\claimname}{Claim}
\providecommand{\conjecturename}{Conjecture}
\providecommand{\definitionname}{Definition}
\providecommand{\lemmaname}{Lemma}
\providecommand{\propositionname}{Proposition}
\providecommand{\remarkname}{Remark}
\providecommand{\theoremname}{Theorem}
\providecommand{\theoremname}{Theorem}
\def\qed{\hfill $\vcenter{\hrule height .3mm
		\hbox {\vrule width .3mm height 2.1mm \kern 2mm \vrule width .3mm
			height 2.1mm} \hrule height .3mm}$ \bigskip}
\def\to{\rightarrow}
\def\RR{\mathbb{R}}
\def\vol{{\rm Vol}}
\def\cal{\mathcal}
\begin{document}
	
	\title{Isotropic Measures and Maximizing Ellipsoids: Between John and Loewner}
	
	\author{Shiri Artstein-Avidan, David Katzin}\thanks{Authors were supported in part by ISF grant No. 665/15}

	\address{School of Mathematical Sciences, Tel Aviv University, Tel Aviv 69978, Israel}
	\date{December 4, 2016}
	\email{shiri@post.tau.ac.il, david.katzin@wur.nl}
	\subjclass[2010]{52A40, 52A05, 28A75}
	\keywords{John position; John ellipsoid; M-position; Isotropic measures; Maximal intersection position}
	
	\maketitle
	\begin{abstract}
We define a one parameter family of positions of a convex body which interpolates
	between the John position and the Loewner position: for $r>0$, we say that $K$ is in {\em maximal intersection position of radius $r$} if $\vol_{n}(K\cap rB_{2}^{n})\geq \vol_{n}(K\cap rTB_{2}^{n})$ for all $T\in SL_{n}$. 
We show that under mild
 	conditions on $K$,  each such
 	position induces a corresponding isotropic measure on the sphere,
 	which is simply a normalized Lebesgue measure on $r^{-1}K\cap S^{n-1}$. In particular, for $r_{M}$ satisfying $r_{M}^{n}\kappa_{n}=\vol_{n}(K)$, the maximal intersection
 position of radius $r_{M}$ is an $M$-position, so we get an $M$-position with
 	an associated isotropic measure.
Lastly, we give an interpretation of John's theorem on contact points as a limit case of 
the measures induced from the maximal intersection positions. 
	\end{abstract}
	
	\maketitle
	
	\section{Introduction and main results\label{sec:Introduction}}
	
	Given a convex body (that is, a compact convex set with non-empty
	interior) in $\mathbb{R}^{n}$, the \emph{John ellipsoid }$J(K)$
	is the maximum-volume ellipsoid contained in $K$. The body $K$ is
	in \emph{John position }if $J(K)=B_{2}^{n}$, the Euclidean unit ball.
	Dually, the \emph{Loewner ellipsoid }$L(K)$ is the minimum-volume
	ellipsoid containing $K$, and $K$ is in \emph{Loewner position }if
	$L(K)=B_{2}^{n}$. The John and Loewner positions always exist and
	are unique up to orthogonal transformations. They are dual in the
	sense that $J(K^{\circ})=(L(K))^{\circ}$ where $L^{\circ}=\left\{ y:\left\langle x,y\right\rangle \leq1\text{ }\forall x\in L\right\} $
	is the \emph{dual body} of $L$ (see \cite{Artstein-Avidan2015} for more details). 
	
	A finite Borel measure $\mu$ on $S^{n-1}$ is \emph{isotropic} if 
	\[\int_{S^{n-1}}\left\langle x,\theta\right\rangle ^{2}d\mu(x)=\frac{\mu(S^{n-1})}{n} \]
for all $\theta\in S^{n-1}$. In 1948, Fritz John \cite{john1948extremum}
	showed the following:
	\begin{thm}[John]
		\label{thm:john}Let $K\subset\mathbb{R}^{n}$ be a convex body in
		John position. Then there exists an isotropic measure whose support
		is contained in $\partial K\cap S^{n-1}$. Moreover, there exists
		such a measure whose support is at most $n(n+1)/2$ points. 
	\end{thm}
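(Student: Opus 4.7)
The plan is to reduce the theorem to the algebraic statement that the identity matrix $\Id$ belongs to the convex cone generated by the rank-one matrices $\{xx^{T} : x \in \partial K \cap \sph\}$. Indeed, any representation $\Id = \sum_{i} c_{i} x_{i} x_{i}^{T}$ with $c_{i} > 0$ produces the measure $\mu = \sum_{i} c_{i} \delta_{x_{i}}$ supported in $\partial K \cap \sph$ and satisfying $\int_{\sph} \langle x, \theta \rangle^{2} d\mu(x) = \langle \theta, \theta \rangle = 1 = \mu(\sph)/n$ for every $\theta \in \sph$, since taking traces forces $\sum c_{i} = n$; this is precisely isotropy.

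Let $C = \partial K \cap \sph$ be the (compact) contact set. Suppose for contradiction that $\Id$ does not belong to the cone above. Since $C$ is compact and every $xx^{T}$ has trace one, the cone is closed, and Hahn--Banach separation in the space of symmetric matrices (with the inner product $\langle A, B \rangle = \mathrm{tr}(AB)$) produces a symmetric $H$ with $\langle Hx, x \rangle \leq 0$ on $C$ and $\mathrm{tr}(H) > 0$. After replacing $H$ by $H - \eps \Id$ for small $\eps > 0$, I may assume the first inequality is uniformly strict on $C$ while still $\mathrm{tr}(H) > 0$.

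The main step is to use $H$ to construct a linear perturbation of $B_{2}^{n}$ inscribed in $K$ with strictly larger volume, contradicting John extremality. For small $t > 0$ set $\mathcal{E}_{t} = (\Id + tH) B_{2}^{n}$. Its volume is $\det(\Id + tH)\, \vol_{n}(B_{2}^{n}) = (1 + t\, \mathrm{tr}(H) + O(t^{2})) \vol_{n}(B_{2}^{n}) > \vol_{n}(B_{2}^{n})$ for small $t > 0$. To verify $\mathcal{E}_{t} \subset K$, it suffices to check $(\Id + tH)y \in K$ for every $y \in \sph$. On an open neighborhood $U$ of $C$ in $\sph$ where $\langle Hy, y \rangle$ stays negative (by continuity and compactness of $C$), one finds $|(\Id + tH)y|^{2} = 1 + 2t\langle Hy, y\rangle + O(t^{2}) < 1$, so the image lies in $B_{2}^{n} \subset K$; on the compact complement $\sph \setminus U$ every point sits at a uniform positive distance from $\partial K$, so for $t$ small the perturbation stays inside $K$. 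Reconciling the two regimes and choosing a single admissible $t$ is the main technical hurdle.

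Finally, the support bound comes from Carath\'eodory's theorem: the matrices $xx^{T}$ lie in the affine hyperplane $\{M : \mathrm{tr}(M) = 1\}$ of the space of symmetric matrices, which has affine dimension $n(n+1)/2 - 1$. Hence the normalized representation of $\Id/n$ as a convex combination of $\{x_{i}x_{i}^{T}\}$ can be chosen with at most $n(n+1)/2$ terms, giving the claimed sharpness.
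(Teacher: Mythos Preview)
The paper does not give its own proof of Theorem~\ref{thm:john}: John's theorem is quoted as a classical result and attributed to \cite{john1948extremum}. Your argument is the standard variational/separation proof and is correct; the ``main technical hurdle'' you flag is handled in the usual way by fixing a neighborhood $U$ of $C$ on which $\langle Hy,y\rangle \le -\delta$ uniformly, so that on $U$ one has $|(\Id+tH)y|^2 \le 1 - 2t\delta + t^2\|H\|^2 < 1$ for all sufficiently small $t$, while on the compact set $\sph\setminus U$ (which lies in $\mathrm{int}\,K$ since it misses $C=\partial K\cap\sph$ and $B_2^n\subset K$) there is a uniform positive distance to $\partial K$, hence a single $t>0$ works on all of $\sph$.

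What the paper \emph{does} offer in lieu of a proof is a different route to the first assertion: Theorem~\ref{thm:measure-limit-john} shows that, for centrally symmetric $K$ satisfying the mild regularity hypothesis $\vol_{n-1}(\partial K\cap\partial\mathcal{E})=0$ for all but finitely many ellipsoids, the isotropic measure on $\partial K\cap\sph$ arises as a weak limit of the explicit isotropic measures $\mu_r$ on $\sph\setminus r^{-1}T_rK$ supplied by Theorem~\ref{thm:max-intersection-isotropic} as $r\searrow 1$. This is conceptually quite different from your separation argument---it produces the measure constructively as a limit rather than via Hahn--Banach---but it is weaker: it needs central symmetry and the regularity assumption, and it gives no control on the size of the support, so the $n(n+1)/2$ bound (which you obtain cleanly from Carath\'eodory in the trace-one hyperplane of symmetric matrices) is not recovered by the paper's approach.
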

	\noindent A reverse result was given by K.~Ball \cite{Ball1992}, who showed that if $B_2^n \subseteq K$ and there is  an isotropic measure supported on $\partial K\cap S^{n-1}$, then $K$ is in John position. 
	By duality, the same result holds for a body in Loewner position.
	
	John's theorem is a special case of a general phenomenon: the family
	$\left\{ TK:T\in SL_{n}\right\}$ of a convex body $K$ is called
	the family of \emph{positions} of $K$. Giannopoulos and Milman \cite{Giannopoulos2000}
	showed that solutions to extremal problems over the positions of a
	convex body often give rise to isotropic measures, and demonstrated this fact for, among others, the John position, the isotropic
	position, the minimal surface area position, and an $M$-position.
	
	In this work, we consider a one-parametric family of extremal positions
	which seems not to have been considered before:
	\begin{defn}
		\label{def:max-intersection-ellipsoid}For a centrally symmetric convex
		body $K\subset\mathbb{R}^{n}$, the ellipsoid $\mathcal{E}_{r}$ of
		volume $r^{n}\kappa_{n}$ is a \emph{maximum intersection ellipsoid
			of radius $r$}, if 
		\[
		\vol_{n}\left(K\cap\mathcal{E}_{r}\right)\geq\vol_{n}\left(K\cap\mathcal{E}\right)
		\]
		for all ellipsoids $\mathcal{E}$ of volume $r^{n}\kappa_{n}$, where
		$\kappa_{n}=\vol_{n}(B_{2}^{n})$. We say that $K$ is in \emph{maximal
			intersection position of radius $r$ }if $rB_{2}^{n}$ is a maximum
		intersection ellipsoid of radius $r$.
	\end{defn}

	In the following, $\mathcal{E}_{r}$ will always denote a maximum intersection ellipsoid of radius $r$. The set of maximal intersection positions interpolates between the
	John and Loewner positions: indeed, let $r_{J}$ be a positive number
	satisfying $\vol_{n}(J(K))=r_{J}^{n}\kappa_{n}$, and let $r_{L}$
	be such that $\vol_{n}(L(K))=r_{L}^{n}\kappa_{n}$. It can be
	easily shown that $K$ is in maximal intersection position of radius
	$r_{J}$ if and only if $r_{J}^{-1}K$ is in John position, and similarly
	for the Loewner position. In other words, up to a scaling, the maximal
	intersection position of radius $r_{J}$ is the John position, and
	the maximal intersection position of radius $r_{L}$ is the Loewner
	position.
	
	Our first result is the following: 
	\begin{thm}
		\label{thm:max-intersection-isotropic} Let $K\subset\mathbb{R}^{n}$
		be a centrally symmetric convex body such that $\vol_{n-1}(\partial K\cap\partial\mathcal{E})=0$
		for all but finitely many ellipsoids $\mathcal{E}$, $\vol_{n-1}(\partial K\cap rS^{n-1})=0$,
		and $\vol_{n-1}(K\cap rS^{n-1})>0$. If $K$ is in maximal intersection
		position of radius $r$, then the restriction of the surface area
		measure on the sphere to $S^{n-1}\cap r^{-1}K$ is an isotropic measure.
	\end{thm}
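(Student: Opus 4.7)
The plan is to derive isotropy from a first-order variational condition at the maximizer, in the spirit of Giannopoulos--Milman.

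\textbf{Setup.} First I reduce to $r=1$: since $\vol_n(K\cap rTB_2^n)=r^n\vol_n(r^{-1}K\cap TB_2^n)$, the body $K$ is in maximal intersection position of radius $r$ iff $L:=r^{-1}K$ is in maximal intersection position of radius $1$. Replacing $K$ by $L$ and writing $\mu$ for the restriction of the spherical Lebesgue measure $\sigma$ to $S^{n-1}\cap L$, isotropy of $\mu$ is equivalent to the matrix identity
$$M:=\int_{S^{n-1}\cap L}\theta\theta^T\,d\sigma(\theta)=\frac{\mu(S^{n-1})}{n}\,I,$$
so the task is to show that $M$ is a scalar multiple of the identity.

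\textbf{First-order variation.} For any $n\times n$ matrix $A$ with $\mathrm{tr}(A)=0$, I take the smooth path $T_\eps:=e^{\eps A}\in SL_n$ and consider $V(\eps):=\vol_n(L\cap T_\eps B_2^n)$, which by hypothesis has a maximum at $\eps=0$. In polar coordinates the radial function of $T_\eps B_2^n$ expands as
$$\rho_\eps(\theta)=\frac{1}{|T_\eps^{-1}\theta|}=1+\eps\langle A\theta,\theta\rangle+O(\eps^2).$$
Since $L$ is star-shaped at the origin, a direction $\theta\in S^{n-1}$ contributes to the symmetric difference $(L\cap B_2^n)\triangle(L\cap T_\eps B_2^n)$ for small $\eps$ iff $\theta\in\mathrm{int}(L)$, up to a set of $\sigma$-measure zero by the hypothesis $\vol_{n-1}(\partial L\cap S^{n-1})=0$. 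Polar integration of the resulting thin radial slab then yields
$$V(\eps)-V(0)=\eps\int_{S^{n-1}\cap L}\langle A\theta,\theta\rangle\,d\sigma(\theta)+o(\eps)=\eps\,\mathrm{tr}(AM)+o(\eps).$$
As $V(0)\ge V(\eps)$ for all small $\eps$ and $A$ may be replaced by $-A$, the linear coefficient must vanish: $\mathrm{tr}(AM)=0$ for every traceless $A$. Since $M$ is symmetric, the antisymmetric part of $A$ drops out, so $\mathrm{tr}(BM)=0$ for every symmetric traceless $B$. Inside the space of symmetric matrices this forces $M=cI$ for some $c\in\RR$, and taking traces gives $c=\mu(S^{n-1})/n$, which is exactly isotropy.

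\textbf{Main obstacle.} The technical heart of the argument is rigorously justifying the first-order expansion of $V(\eps)$. One must show that for $\theta\in\mathrm{int}(L)\cap S^{n-1}$ the entire radial slab between $\rho=1$ and $\rho_\eps(\theta)$ lies in $L$ for small $\eps$ (so it contributes precisely the linear term), and that the seam $\partial L\cap S^{n-1}$ contributes only $o(\eps)$. The hypothesis $\vol_{n-1}(\partial L\cap S^{n-1})=0$ handles the seam at $\eps=0$; the hypothesis $\vol_{n-1}(\partial L\cap\partial\mathcal{E})=0$ for all but finitely many $\mathcal{E}$ plays the same role along the perturbation $T_\eps B_2^n$ and rules out pathological coincidences; and $\vol_{n-1}(L\cap S^{n-1})>0$ ensures $\mu$ is nontrivial, so the isotropy identity is meaningful.
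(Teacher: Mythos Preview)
Your high-level strategy matches the paper's exactly: reduce to $r=1$, show that $\left.\tfrac{d}{d\eps}\right|_{\eps=0}\vol_n(L\cap e^{\eps A}B_2^n)=\int_{S^{n-1}\cap L}\langle A\theta,\theta\rangle\,d\sigma$ for every traceless $A$, and conclude isotropy from the vanishing of this derivative at the maximizer via the traceless-matrix characterization. The difference lies entirely in how the derivative formula is justified. The paper packages this as a separate theorem and proves it by mollifying $\mathbf{1}_K$ and $\mathbf{1}_{B_2^n}$, differentiating under the integral, integrating by parts, and then passing to the limit via a Dini-type uniform-convergence argument on a full \emph{neighborhood} of $t=0$; that last step is precisely where the hypothesis $\vol_{n-1}(\partial K\cap\partial\mathcal E)=0$ for all but finitely many $\mathcal E$ is consumed. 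Your polar-coordinate route is more elementary and, once carried through, does not actually need that hypothesis: writing $f_\eps(\theta)=\tfrac1n\big[\min(\rho_L(\theta),\rho_\eps(\theta))^n-\min(\rho_L(\theta),1)^n\big]$, the ratio $f_\eps/\eps$ is uniformly bounded on $S^{n-1}$ and converges pointwise $\sigma$-a.e.\ (using only $\vol_{n-1}(\partial L\cap S^{n-1})=0$ to discard the set $\{\rho_L=1\}$) to $\mathbf{1}_{\mathrm{int}(L)}(\theta)\langle A\theta,\theta\rangle$, so a single appeal to dominated convergence yields $V'(0)$ directly. Thus your ``Main obstacle'' dissolves cleanly, and the role you tentatively assign to the all-but-finitely-many-ellipsoids hypothesis is superfluous in your own argument---it is an artifact of the paper's technique, not of the problem itself.
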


	\begin{rem}\label{remark-thm-derivate}
		Note that the condition $\mbox{Vol}_{n-1}(\partial K\cap rS^{n-1})=0$
		cannot be omitted. As an example, consider the convex hull of a ball
		and two points, e.g., $K=\text{conv}\{B_{2}^{2}\cup{(\pm\sqrt{2},0)}\}\subset\mathbb{R}^{2}$.
		Here one may check that $K$ is in John position, and so it is in maximal intersection
		position of radius 1. However, the restriction of the surface area measure to $K\cap S^{n-1}$
		is clearly not isotropic, as it has more weight in the direction of the $y$ axis than in the direction of the $x$ axis.		
	\end{rem}
	
	We will denote the surface area measure on the sphere by $\sigma$, and for a Borel set $A \subset \RR^n$ with $\sigma (A\cap S^{n-1}) >0$ we let $\mu_{A}$ denote the restriction of $\sigma$ to  $A$, i.e. 
		\[
		\mu_{A}(B)=\frac{\sigma(B\cap A \cap S^{n-1})}{\sigma(A\cap S^{n-1})}.
		\]
		
 Note that if $\mu_{A}$ is isotropic and $\sigma (S^{n-1}\backslash A)>0$, then $\mu_{S^{n-1}\backslash A}$ is also isotropic.

	Theorem \ref{thm:max-intersection-isotropic} shows that as in \cite{Giannopoulos2000}, an extremal position induces
	an isotropic measure. Contrary to John's  Theorem \ref{thm:john},
	in our case we have an explicit description of the isotropic measure, which is uniform on $r^{-1}K \cap S^{n-1}$, 
	namely it is $\mu_{r^{-1}K}$.
	
	Theorem \ref{thm:max-intersection-isotropic} does not formally include the result of Theorem \ref{thm:john}, in the case $r=r_J=1$, since  
for $K$ in John position we have $S^{n-1}\subset K$, so  Theorem \ref{thm:max-intersection-isotropic} merely states that $\sigma$ is isotropic, a triviality. 	
	Nevertheless, our second result gives a new interpretation to John's Theorem. We show that when $K$ is in John position, the isotropic
	measure which is guaranteed to exist by Theorem \ref{thm:john} may
	be constructed as a limit of the isotropic measures from Theorem \ref{thm:max-intersection-isotropic}.
	In other words, as $r$ approaches $r_{J}$, the corresponding induced
	measures approach a measure of the type described in John's theorem:
	\begin{thm}
		\label{thm:measure-limit-john}Let $K\subset\mathbb{R}^{n}$ be a
		centrally symmetric convex body in John position such that $\vol_{n-1}(\partial K\cap\partial\mathcal{E})=0$
		for all but finitely many ellipsoids $\mathcal{E}$. For every $r>1$,
		denote by $\mu_{r}$ the uniform probability measure on $S^{n-1}\backslash r^{-1}T_{r}K$,
		where $T_{r}K$ is in maximal intersection position of radius $r$.
		Then there exists a sequence $r_{j}\searrow1$ such that the sequence
		of measures $\mu_{r_{j}}$ weakly converges to an isotropic measure
		whose support is contained in $\partial K\cap S^{n-1}$.
	\end{thm}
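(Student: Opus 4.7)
The plan is to pass to a weak limit of the isotropic probability measures $\mu_r$ as $r\searrow1$; the nontrivial content is identifying the support of the limit.

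First I would check that each $\mu_r$ is a well-defined isotropic probability measure. The hypothesis on $K$ is $SL_n$-invariant and so is inherited by $T_rK$. The non-degeneracy conditions of Theorem \ref{thm:max-intersection-isotropic} hold for $r$ near $1$: the condition $\vol_{n-1}(\partial(T_rK)\cap rS^{n-1})=0$ fails only for countably many radii (those for which $T_r^{-1}(rB_2^n)$ is one of the exceptional ellipsoids), and $\vol_{n-1}(T_rK\cap rS^{n-1})>0$ holds because a centrally symmetric body in John position has inradius exactly $1$ (otherwise $r_0 B_2^n\subset K$ with $r_0>1$ would give an inscribed ellipsoid of volume exceeding $\vol_n(J(K))$), so $rB_2^n\not\subset K$ for $r>1$ and, by continuity, $rS^{n-1}$ meets both $T_rK$ and its complement for $T_r$ near any orthogonal map. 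Theorem \ref{thm:max-intersection-isotropic} then gives isotropy of the uniform probability measure on $r^{-1}T_rK\cap S^{n-1}$, and the remark following it transfers isotropy to the complementary measure $\mu_r$.

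By weak sequential compactness of Borel probability measures on the compact space $S^{n-1}$, some subsequence $\mu_{r_j}$ with $r_j\searrow 1$ converges weakly to a probability measure $\mu$. Continuity of $x\mapsto\langle x,\theta\rangle^2$ on $S^{n-1}$ gives $\int\langle x,\theta\rangle^2\,d\mu(x)=\lim_j\int\langle x,\theta\rangle^2\,d\mu_{r_j}(x)=1/n$ for every $\theta\in S^{n-1}$, so $\mu$ is isotropic.

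The main obstacle is pinpointing $\supp\mu$. I would show that, after multiplying each $T_r$ on the left by a suitable orthogonal map (which yields another valid choice of max-intersection position since $OB_2^n=B_2^n$, and pushes the measure $\mu_r$ forward by an orthogonal), one may arrange $T_{r_j}\to I$ along a further subsequence. This rests on two points: first, $\{T_r\}_{r\in(1,2]}$ is bounded in $SL_n$, because $\vol_n(TK\cap rB_2^n)\to 0$ as $T$ degenerates in $SL_n$ uniformly for $r$ in a bounded interval, whereas the maximum value is at least $\vol_n(K\cap B_2^n)=\kappa_n$; second, any subsequential limit $T^*$ is orthogonal, since joint continuity of $(T,r)\mapsto\vol_n(TK\cap rB_2^n)$ shows that $T^*$ maximizes $\vol_n(TK\cap B_2^n)$, so $T^*K$ is in max-intersection position of radius $1$, equivalently in John position, and uniqueness of the John position modulo $O(n)$ forces $T^*\in O(n)$. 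Replacing $T_r$ by $(T^*)^{-1}T_r$ we achieve $T_{r_j}\to I$, hence $r_j^{-1}T_{r_j}K\to K$ in Hausdorff distance. For any open $U\subset S^{n-1}$ with $\overline U\subset\mathrm{int}(K)$, Hausdorff convergence puts $\overline U$ inside $r_j^{-1}T_{r_j}K$ for all large $j$, so $\mu_{r_j}(U)=0$ eventually, and the Portmanteau theorem gives $\mu(U)=0$. Since $S^{n-1}\subset K$ (because $B_2^n\subset K$), this forces $\supp\mu\subset\partial K\cap S^{n-1}$, completing the proof.
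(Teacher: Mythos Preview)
Your proof is correct and follows essentially the same route as the paper: establish isotropy of each $\mu_r$ via Theorem~\ref{thm:max-intersection-isotropic}, extract a weak limit by compactness of $S^{n-1}$, pass isotropy through the limit, and use Portmanteau together with $r_j^{-1}T_{r_j}K\to K$ to pin the support on $\partial K\cap S^{n-1}$. The only notable difference is in how you obtain $T_{r_j}\to I$: the paper invokes Lemma~\ref{lem-convergence-of-ellipse-toJohn} (convergence $\mathcal{E}_r\to J(K)$ via continuity of $m(r)$) and simply takes $T_r$ positive definite, which forces $T_r\to I$ directly; you instead argue boundedness of $\{T_r\}$, take a subsequential limit $T^*$, show $T^*\in O(n)$ by uniqueness of the John position, and then left-multiply by $(T^*)^{-1}$. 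Both arguments are the same compactness-plus-uniqueness idea; the paper's choice of positive definite $T_r$ just spares you the orthogonal adjustment.
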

	A similar result holds for the Loewner position:
	\begin{thm}
		\label{thm:measure-limit-loewner}Let $K\subset\mathbb{R}^{n}$ be
		a centrally symmetric convex body in Loewner position such that $\vol_{n-1}(\partial K\cap\partial\mathcal{E})=0$
		for all but finitely many ellipsoids $\mathcal{E}$. For every $r<1$,
		denote by $\nu_{r}$ the uniform probability measure on $S^{n-1}\cap r^{-1}T_{r}K$,
		where $T_{r}K$ is in maximal intersection position of radius $r$.
		Then there exists a sequence $r_{j}\nearrow1$ such that the sequence
		of measures $\nu_{r_{j}}$ weakly converges to an isotropic measure
		whose support is contained in $\partial K\cap S^{n-1}$.
	\end{thm}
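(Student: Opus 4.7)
The plan is to apply Theorem~\ref{thm:max-intersection-isotropic} to $T_r K$ for suitable $r$, extract a subsequence $r_j \nearrow 1$ along which both the transformations $T_{r_j}$ and the measures $\nu_{r_j}$ converge, and identify the limit.

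First I would verify that for all but finitely many $r \in (0,1)$ the body $T_r K$ satisfies the hypotheses of Theorem~\ref{thm:max-intersection-isotropic}. The ellipsoidal boundary hypothesis transfers from $K$ to $T_r K$ via the bijection $\mathcal{E} \mapsto T_r^{-1}\mathcal{E}$. The condition $\vol_{n-1}(\partial(T_r K) \cap r S^{n-1}) = 0$ concerns the single ellipsoid $r T_r^{-1} B_2^n$, which lies outside the fixed finite ``bad'' set for all but finitely many $r$. For positivity $\vol_{n-1}(T_r K \cap r S^{n-1}) > 0$, note that $T_r K \not\subseteq r B_2^n$ (otherwise $K \subseteq r T_r^{-1} B_2^n$, an ellipsoid of volume $r^n \kappa_n < \kappa_n$, contradicting uniqueness of the Loewner ellipsoid); since $0 \in \mathrm{int}(T_r K)$, the radial function $t_\theta = \max\{t > 0 : t\theta \in T_r K\}$ is continuous on $S^{n-1}$ and exceeds $r$ on a non-empty open set of directions $\theta$, which scales to a positive-measure piece of $T_r K \cap r S^{n-1}$. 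Hence $\nu_r = \mu_{r^{-1} T_r K}$ is an isotropic probability measure for a dense set of $r$ in $(0,1)$.

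The main technical step is compactness of $\{T_r\}$ in $SL_n$ as $r \nearrow 1$. The max intersection inequality applied at $T = I$, with $K \subseteq B_2^n$, yields $\vol_n(T_r K \cap r B_2^n) \geq \vol_n(K \cap r B_2^n) \to \vol_n(K)$, and together with $\vol_n(T_r K \cap r B_2^n) \leq \vol_n(T_r K) = \vol_n(K)$ this forces $\vol_n(T_r K \cap r B_2^n) \to \vol_n(K)$. If the smallest singular value $\sigma_{\min}(T_r)$ tended to $0$ along a subsequence, then $T_r B_2^n$ (which contains $T_r K$) would be trapped in a slab of thickness $2\sigma_{\min}$, forcing $\vol_n(T_r K \cap B_2^n) \to 0$ and contradicting the above; since $T_r \in SL_n$, $\sigma_{\min}$ bounded below also bounds $\sigma_{\max}$ above. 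Thus $\{T_r\}$ is precompact, and passing to a subsequence $r_j \nearrow 1$ with $T_{r_j} \to T_*$, continuity of volumes gives $\vol_n(T_* K \cap B_2^n) = \vol_n(K)$, so $T_* K \subseteq B_2^n$; uniqueness of the Loewner ellipsoid forces $T_* \in O_n$. Replacing $T_{r_j}$ with $T_*^{-1} T_{r_j}$, which still realizes the max intersection position (since $T_*^{-1}$ fixes $r B_2^n$) and only rotates $\nu_{r_j}$ by the orthogonal $T_*^{-1}$, we may assume $T_{r_j} \to I$.

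Finally, weak compactness of probability measures on the compact space $S^{n-1}$ lets me extract a further subsequence with $\nu_{r_j} \to \nu$ weakly for some probability measure $\nu$. Isotropy of $\nu$ follows from weak convergence applied to the bounded continuous test functions $x \mapsto \langle x,\theta\rangle^2$. For the support claim, let $U \subset S^{n-1}$ be open with $\bar U \cap (\partial K \cap S^{n-1}) = \emptyset$; since $K \subseteq B_2^n$ yields $K \cap S^{n-1} = \partial K \cap S^{n-1}$, the compact sets $\bar U$ and $K$ are disjoint, so $d(\bar U, K) > 0$. The Hausdorff convergence $r_j^{-1} T_{r_j} K \to K$ then gives $\bar U \cap r_j^{-1} T_{r_j} K = \emptyset$ for $j$ large, so $\nu_{r_j}(U) = 0$, and by Portmanteau $\nu(U) \leq \liminf_j \nu_{r_j}(U) = 0$. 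Exhausting $S^{n-1} \setminus (\partial K \cap S^{n-1})$ by an increasing union of such open sets then yields $\nu(S^{n-1} \setminus (\partial K \cap S^{n-1})) = 0$, so $\mathrm{supp}\,\nu \subseteq \partial K \cap S^{n-1}$. The compactness step for $\{T_r\}$ in $SL_n$ is the principal technical obstacle.
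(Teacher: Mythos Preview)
Your proof is correct and follows essentially the same route as the paper's. The paper treats Theorem~\ref{thm:measure-limit-loewner} as the direct analogue of Theorem~\ref{thm:measure-limit-john}: invoke Theorem~\ref{thm:max-intersection-isotropic} to make each $\nu_r$ isotropic, use convergence of the maximizing ellipsoids (the Loewner analogue of Lemma~\ref{lem-convergence-of-ellipse-toJohn}) to get $T_r\to I_n$, pass to a weak limit, and check isotropy and support by Portmanteau with the open sets $U_k=\{x\in S^{n-1}:d(x,\partial K)>1/k\}$. Your argument matches this step for step; the only difference is that instead of citing a Loewner version of Lemma~\ref{lem-convergence-of-ellipse-toJohn}, you prove the compactness of $\{T_r\}$ directly via the slab bound on $\sigma_{\min}(T_r)$ and then normalize by the orthogonal limit $T_*$. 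This is exactly the content of that lemma rephrased in terms of $T_r$ rather than $\mathcal{E}_r$, so the two arguments are equivalent. One small remark: your replacement $T_{r_j}\mapsto T_*^{-1}T_{r_j}$ amounts to making a particular choice of $T_{r_j}$, which is legitimate since the theorem only asserts the existence of such a sequence and the paper likewise chooses $T_r\to I_n$ explicitly.
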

	In the range $[r_{J},r_{L}]$ there is a special radius which we denote
	$r_{M}$, defined so that $\vol_{n}(K)=r_{M}^{n}\kappa_{n}$,
	and for this special radius the maximal intersection position of radius
	$r_{M}$ is an $M$-position. To explain what this means we need a
	few more definitions and background.
	
	In the mid-80s, Vitali Milman \cite{milman1986inegalite} discovered
	the existence of a position for convex bodies which enabled him, and
	the researchers following, to prove many new results, and had
	a major influence on the field. This position, now called $M$-position, can be described in many different and equivalent ways.
	We choose one such way, and for an extensive description and the many
	equivalences see \cite{Artstein-Avidan2015}.
	\begin{thm}[Milman]
		\label{thm:M-pos} There exists a universal constant $C>0$ such
		that for every $n\in\mathbb{N}$ and any centrally symmetric convex
		body $K\subset\mathbb{R}^{n}$, there exists a centrally symmetric
		ellipsoid $\mathcal{E}$ with $\vol_{n}(\mathcal{E})=\vol_{n}(K)$
		such that 
		\begin{equation}
		\frac{\vol_{n}(K^{\circ}+\mathcal{E^{\circ}})}{\vol_{n}(K^{\circ}\cap\mathcal{E^{\circ}})}\frac{\vol_{n}(K+\mathcal{E})}{\vol_{n}(K\cap\mathcal{E})}\leq C^{n}.\label{eq-M-position}
		\end{equation}
	\end{thm}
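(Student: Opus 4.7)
The strategy is to reduce the inequality (\ref{eq-M-position}) to upper bounds on four covering numbers, and then to construct the ellipsoid using Pisier's regular $M$-ellipsoid estimate. For two convex bodies $A,B\subset\RR^{n}$, let $N(A,B)$ denote the minimal number of translates of $B$ needed to cover $A$. I would first show that if $\mathcal{E}$ is a centrally symmetric ellipsoid with $\vol_{n}(\mathcal{E})=\vol_{n}(K)$ and
\[
\max\bigl\{N(K,\mathcal{E}),\,N(\mathcal{E},K),\,N(K^{\circ},\mathcal{E}^{\circ}),\,N(\mathcal{E}^{\circ},K^{\circ})\bigr\}\leq c_{1}^{n},
\]
then (\ref{eq-M-position}) holds with a constant $C$ depending only on $c_{1}$.

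For the numerator, covering $K$ by translates of $\mathcal{E}$ gives $\vol_{n}(K+\mathcal{E})\leq N(K,\mathcal{E})\vol_{n}(2\mathcal{E})=2^{n}N(K,\mathcal{E})\vol_{n}(\mathcal{E})$. For the denominator, write $\mathcal{E}\subseteq\bigcup_{i=1}^{N(\mathcal{E},K)}(x_{i}+K)$; by pigeonhole some $(x_{i}+K)\cap\mathcal{E}$ has volume at least $\vol_{n}(\mathcal{E})/N(\mathcal{E},K)$, and since both $K$ and $\mathcal{E}$ are centrally symmetric the function $x\mapsto\vol_{n}((x+K)\cap\mathcal{E})$ is even and log-concave (Pr\'ekopa--Leindler), hence maximized at $x=0$, giving $\vol_{n}(K\cap\mathcal{E})\geq\vol_{n}(\mathcal{E})/N(\mathcal{E},K)$. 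The polar factor is bounded identically using $\vol_{n}(\mathcal{E}^{\circ})=\kappa_{n}^{2}/\vol_{n}(\mathcal{E})$ and the remaining two covering bounds, completing the reduction.

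To produce such an ellipsoid I would place $K$ in the $\ell$-position: choose $T\in SL_{n}$ minimizing the product $\ell(TK)\cdot\ell((TK)^{\circ})$ where $\ell(L)=\int_{\RR^{n}}\|x\|_{L^{\circ}}\,d\gamma_{n}(x)$ is the Gaussian mean width, and set $\mathcal{E}=\lambda T^{-1}B_{2}^{n}$ with $\lambda$ chosen so that $\vol_{n}(\mathcal{E})=\vol_{n}(K)$. Pisier's covering-number estimate for the $\ell$-position then gives $\log N(K,t\mathcal{E})\leq cn/t^{\alpha}$ for every $t\geq 1$ and some universal $\alpha>0$, with symmetric bounds on $N(\mathcal{E},K)$ and on the polar pair via the duality of entropy numbers in the $\ell$-position. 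Taking $t=1$ delivers the covering bounds required above with a universal constant $c_{1}$.

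The main obstacle is precisely Pisier's covering-number bound in the $\ell$-position, which rests on Talagrand-type entropy duality, Gaussian processes and the Rademacher projection, and is by far the deepest ingredient. Everything else --- the reduction from covering numbers to volume ratios, the log-concavity of the intersection function, and the final rescaling to equal volume --- is essentially routine once that estimate is available.
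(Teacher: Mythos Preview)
The paper does not prove this theorem. It is quoted as a classical result of Milman, with a reference to \cite{milman1986inegalite} and a pointer to \cite{Artstein-Avidan2015} for the various equivalent formulations; no argument is supplied. There is therefore nothing in the paper to compare your proposal against.

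For what it is worth, your outline follows one of the standard routes (essentially Pisier's) and the reduction from covering numbers to the four volume quantities in \eqref{eq-M-position} is correct. A couple of inaccuracies in the construction step: the $\ell$-position is not obtained by minimizing $\ell(TK)\cdot\ell((TK)^{\circ})$ over $SL_{n}$; one uses the Figiel--Tomczak-Jaegermann/Lewis position in which $\ell(K)\,\ell^{*}(K)=n$, and then Pisier's bound on the $K$-convexity constant gives $\ell(K)\,\ell(K^{\circ})\leq cn\log n$. The covering bounds then come from Sudakov and dual Sudakov applied after rescaling $B_{2}^{n}$ so that the ellipsoid has the same volume as $K$; the polar pair is handled either by the ellipsoidal case of entropy duality or by a second application of Sudakov to $K^{\circ}$. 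With these corrections the sketch is a valid proof plan, but it is your own addition rather than a comparison with the paper.
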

	In fact, one may show that if an ellipsoid of the same volume as $K$
	satisfies any of the four inequalities 
	\begin{eqnarray*}
		\vol(K^{\circ}+\mathcal{E^{\circ}})\leq c_{1}^{n}\vol_{n}(K), &  & \vol_{n}(K^{\circ}\cap\mathcal{E^{\circ}})\geq c_{1}^{-n}\vol_{n}(K),\\
		\vol(K+\mathcal{E})\leq c_{1}^{n}\vol_{n}(K), &  & \vol_{n}(K\cap\mathcal{E})\geq c_{1}^{-n}\vol_{n}(K),
	\end{eqnarray*}
	then it must satisfy inequality $\eqref{eq-M-position}$ with some
	constant $C=C(c_{1})$ depending only on $c_{1}$ and not on the body $K$
	or on the dimension. For this reason, we shall use the following simple
	definition for $M$-position: 
	\begin{defn}
		A centrally symmetric convex body $K$ is in $M$\emph{-position}
		with constant $C$ if the centrally symmetric Euclidean ball of radius
		$\lambda=\left(\frac{\vol(K)}{\kappa_{n}}\right)^{1/n}$ satisfies
		\[
		\vol_{n}(K\cap\lambda B_{2}^{n})\geq C^{-n}\vol_{n}(K).
		\]
		Since Milman's theorem implies that there exists some universal $C$
		for which any body has an affine image in $M$-position with constant
		$C$, we shall usually omit the words ``with constant $C$'' and
		talk simply of ``$M$-position'', by which we mean an $M$-position
		with respect to the constant $C$ guaranteed by Milman's Theorem 
		\ref{thm:M-pos}.
	\end{defn}
	Clearly, when we maximize the volume of the intersection of K and an ellipsoid
	of volume $\vol_{n}(K)$, we get an $M$-ellipsoid, and when
	it is a Euclidean ball we get that $K$ is in $M$-position. We have then:
\begin{cor}\label{cor-m-position}
	Let $K\subset\mathbb{R}^{n}$
	be a centrally symmetric convex body such that $\vol_{n-1}(\partial K\cap\partial\mathcal{E})=0$
	for all but finitely many ellipsoids $\mathcal{E}$, $\vol_{n-1}(\partial K\cap r_{M}S^{n-1})=0$,
	and $\vol_{n-1}(K\cap r_{M}S^{n-1})>0$, where $r_{M}=\left(\frac{\vol(K)}{\kappa_{n}}\right)^{1/n}.$ If $K$ is in maximal intersection
	position of radius $r_{M}$, then $K$ is in $M$-position, and the restriction of the surface area
	measure on the sphere to $S^{n-1}\cap r_{M}^{-1}K$ is an isotropic measure.
	\end{cor}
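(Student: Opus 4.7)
The plan is to assemble the corollary from two results already recorded in the excerpt, with essentially no additional work. The isotropic measure portion is immediate: the hypotheses on $K$ in the corollary (finiteness of the bad set of ellipsoids, $\vol_{n-1}(\partial K \cap r_M S^{n-1}) = 0$, and $\vol_{n-1}(K \cap r_M S^{n-1}) > 0$) are exactly those of Theorem \ref{thm:max-intersection-isotropic} specialized to $r = r_M$. Since $K$ is assumed to be in maximal intersection position of radius $r_M$, Theorem \ref{thm:max-intersection-isotropic} directly yields that the restriction of the surface area measure to $S^{n-1} \cap r_M^{-1} K$ is isotropic.

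For the $M$-position assertion, I would invoke the equivalent formulations stated immediately after Theorem \ref{thm:M-pos}. Milman's theorem supplies a centrally symmetric ellipsoid $\mathcal{E}_0$ with $\vol_n(\mathcal{E}_0) = \vol_n(K)$ satisfying in particular
\[
\vol_n(K \cap \mathcal{E}_0) \geq c_1^{-n}\, \vol_n(K)
\]
for a universal constant $c_1$. By the defining relation $r_M^n \kappa_n = \vol_n(K)$, the ellipsoid $\mathcal{E}_0$ has volume $r_M^n \kappa_n$ and is therefore an admissible competitor in the extremal problem defining maximal intersection position of radius $r_M$. Since $K$ is in that position, the Euclidean ball $r_M B_2^n$ does at least as well, giving
\[
\vol_n(K \cap r_M B_2^n) \;\geq\; \vol_n(K \cap \mathcal{E}_0) \;\geq\; c_1^{-n}\, \vol_n(K),
\]
which is precisely the defining inequality of $M$-position with $\lambda = r_M$.

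Because both parts reduce to direct appeals to earlier statements in the paper, I do not anticipate any real obstacle. The only bookkeeping is to verify that the ellipsoid produced by Milman's theorem is genuinely a competitor against $r_M B_2^n$: this comes down to the matching of volumes (ensured by the definition of $r_M$) and to the fact that the definition of a maximum intersection ellipsoid ranges over \emph{all} ellipsoids of the prescribed volume, so $\mathcal{E}_0$ is admissible regardless of orientation or centering.
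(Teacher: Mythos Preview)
Your proposal is correct and matches the paper's treatment. The paper does not give a separate proof of the corollary; it simply observes, in the sentence preceding the statement, that maximizing intersection over ellipsoids of volume $\vol_n(K)$ automatically yields an $M$-ellipsoid, so when that ellipsoid is the ball $r_M B_2^n$ the body is in $M$-position, while the isotropic measure clause is an instance of Theorem~\ref{thm:max-intersection-isotropic}. Your argument spells out exactly this reasoning.
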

		
	This paper is organized as follows: in Section \ref{sec-prelim} we provide 
	some basic results regarding the maximal intersection position. The section concludes with a detailed proof of the main ingredient for the proof of Theorem \ref{thm:max-intersection-isotropic}. In Section \ref{sec-main-proofs} we prove the main theorems \ref{thm:max-intersection-isotropic}, \ref{thm:measure-limit-john}, and \ref{thm:measure-limit-loewner}. The last section discusses the question of uniqueness of the maximum intersection position, a question that is still open. We show that uniqueness follows from a variant of the $(B)$ conjecture.
	
	\section{Preliminaries}\label{sec-prelim}
In this section we provide some results needed for the proof of the main theorems. We start by showing that for $r>0$, the maximal intersection position
	of radius $r$ does in fact exist. We will make frequent use of the
	following function:
	\begin{defn}
		\label{def:m-function}For a centrally symmetric convex body $K=-K\subset\mathbb{R}^{n}$,
		define for every $r>0$, 
		\begin{equation}
		m(r)=\sup\left\{ \vol_{n}(K\cap\mathcal{E}):\mathcal{E}\text{ is an ellipsoid of volume }r^{n}\kappa_{n}\right\} .\label{eq:m-function}
		\end{equation}
	\end{defn}
	Our first lemma shows that a maximal intersection ellipsoid always
	exists:
	\begin{lem}
		\label{lem:max-ellipsoid-exists}For every centrally symmetric convex
		body $K\subset\mathbb{R}^{n}$ and every $r>0$, the supremum in 
		(\ref{eq:m-function}) is attained.
	\end{lem}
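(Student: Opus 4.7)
The plan is a standard compactness argument adapted to the non-compact family of volume-constrained ellipsoids.

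First, take a maximizing sequence of ellipsoids $\mathcal{E}_k$ of volume $r^n\kappa_n$ with $\vol_n(K\cap\mathcal{E}_k)\to m(r)$. One checks that $m(r)>0$: since $K$ has non-empty interior and $r>0$, a sufficiently elongated ellipsoid of volume $r^n\kappa_n$ aligned along a diameter of $K$ intersects $K$ in positive volume. Parameterize each $\mathcal{E}_k$ by its center $x_k$, an orthonormal frame $\{v_i^{(k)}\}_{i=1}^n$ of principal directions, and semi-axes $a_1^{(k)}\ge\cdots\ge a_n^{(k)}>0$ subject to $\prod_{i}a_i^{(k)}=r^n$.

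The main obstacle is that this parameter space is not compact: under the product constraint a semi-axis can escape to $\infty$ while another collapses to $0$. The key step is to rule this out along a maximizing sequence. Fix $R>0$ with $K\subseteq RB_2^n$. Since $\mathcal{E}_k$ is contained in the slab $\{y:|\langle y-x_k,v_n^{(k)}\rangle|\le a_n^{(k)}\}$, and every hyperplane section of $K$ has $(n-1)$-volume at most $\kappa_{n-1}R^{n-1}$, Fubini gives
$$\vol_n(K\cap\mathcal{E}_k)\;\le\; 2a_n^{(k)}\kappa_{n-1}R^{n-1}.$$
Combined with $\vol_n(K\cap\mathcal{E}_k)\ge m(r)/2>0$ for large $k$, this forces $a_n^{(k)}\ge\delta$ for some fixed $\delta>0$; the product constraint then yields $a_1^{(k)}\le r^n/\delta^{n-1}$, so every semi-axis lies in $[\delta,r^n/\delta^{n-1}]$. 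Finally, since $K\cap\mathcal{E}_k\neq\emptyset$ and both $K$ and $\mathcal{E}_k$ have uniformly bounded diameter, the centers $x_k$ are bounded as well.

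All parameters now lie in a compact subset of $\R^n\times O(n)\times[\delta,r^n/\delta^{n-1}]^n$, so we may extract a convergent subsequence, whose limit data define an ellipsoid $\mathcal{E}^\ast$ of volume $r^n\kappa_n$. Along this subsequence $\mathcal{E}_{k_j}\to\mathcal{E}^\ast$ in Hausdorff metric, so the indicators $\mathbf{1}_{\mathcal{E}_{k_j}}$ converge to $\mathbf{1}_{\mathcal{E}^\ast}$ pointwise off the set $\partial\mathcal{E}^\ast$, which has Lebesgue measure zero. Dominated convergence with majorant $\mathbf{1}_K$ yields $\vol_n(K\cap\mathcal{E}_{k_j})\to\vol_n(K\cap\mathcal{E}^\ast)$, and therefore $\vol_n(K\cap\mathcal{E}^\ast)=m(r)$, so the supremum is attained.
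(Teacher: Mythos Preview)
Your argument is correct and follows the same compactness strategy as the paper: take a maximizing sequence, show the ellipsoids cannot degenerate, extract a convergent subsequence, and check that the limit realizes $m(r)$. The one noteworthy difference is that the paper first invokes the central symmetry of $K$ together with Brunn--Minkowski to observe that a translated ellipsoid never does better than its centered copy, so it may restrict from the outset to ellipsoids $T B_2^n$ with $T$ positive definite and $\det T=r^n$; this eliminates the center parameter and lets the degeneration argument be phrased purely in terms of the eigenvalues of $T$, after which Blaschke selection finishes. Your route instead keeps track of centers and bounds them separately, which works just as well but is slightly heavier; conversely, your slab/Fubini estimate for the shortest semi-axis makes explicit the step the paper compresses into the one-line assertion that $\vol_n(K\cap T_jB_2^n)\to 0$ when the largest eigenvalue blows up.
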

	\begin{proof}
		First note that since $K=-K$, the Brunn-Minkowski inequality implies that for every $x\in\mathbb{R}^{n}$
		and every $T\in SL_{n}$, we have 
		\begin{equation}
		\vol_{n}\left(K\cap\left(TB_{2}^{n}+x\right)\right)\leq\vol_{n}\left(K\cap TB_{2}^{n}\right)\label{eq:BM-symmetric-ellipsoid}
		\end{equation}
		and so if the supremum is attained, it is attained on a centrally
		symmetric ellispoid. Note that the supremum may also be attained on a
		non-centrally symmetric ellipsoid only if we have equality in (\ref{eq:BM-symmetric-ellipsoid}),
		which is only possible if $K\cap\left(TB_{2}^{n}+x\right)$ and $K\cap\left(TB_{2}^{n}-x\right)$
		are homothetic. This occurs, for instance, in the case $\left(TB_{2}^{n}+x\right)\subset K$
		or $K\subset\left(TB_{2}^{n}+x\right)$, i.e., when $r<r_{J}$ or
		$r>r_{L}$.
		
		Let $\mathcal{E}_{j}=T_{j}B_{2}^{n}$ be a sequence of centrally symmetric
		ellipsoids where $T_{j}$ is positive definite with $\text{det}T_{j}=r^{n}$
		and $\vol_{n}(K\cap T_{j}B_{2}^{n})\rightarrow m(r)$. If the
		sequence defined by the maximum eigenvalue of $T_{j}$ grows to infinity
		then $\vol_{n}(K\cap T_{j}B_{2}^{n})\to0\neq m(r)$, so the
		set of eigenvalues of $\{T_{j}\}_{j=1}^{\infty}$ must be bounded,
		which implies that the ellipsoids $T_{j}B_{2}^{n}$ are all contained
		in a compact set. It now follows from Blaschke's selection theorem
		that there exists a subseqeunce of ellipsoids converging in the Hausdorff
		distance to a centrally symmetric ellipsoid $\mathcal{E}$ of volume
		$r^{n}\kappa_{n}$ with $\vol_{n}(K\cap\mathcal{E})=m(r)$.
	\end{proof}
	
	Note the following properties of $m(r)$:
	\begin{lem}\label{lem-m(r)-continuous}
		Let $K\subset\mathbb{R}^n$ be a centrally symmetric convex body. We have that \\
		(1) For $0<r\leq r_{J}$ we have $m(r)=r^{n}\kappa_{n}$ and for
		$r\geq r_{L}$ we have $m(r)=\vol_{n}(K)$. \\
		(2) The function $m(r)$ is strictly monotone increasing in $[r_{J},r_{L}]$.\\
		(3) $m(r)$ is continuous, and satisfies for $t\leq s$ that 
		\[
		m(t)\le m(s)\le\left(\frac{s}{t}\right)^{n}m(t).
		\]
	\end{lem}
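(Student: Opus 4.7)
The plan is to prove the three items in the order (1), (3), (2), invoking Lemma~\ref{lem:max-ellipsoid-exists} (which, by the Brunn--Minkowski step in its proof, also allows us to restrict the supremum in $m(r)$ to centrally symmetric ellipsoids).

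For (1), I would use the John and Loewner ellipsoids themselves as explicit competitors. When $r \le r_J$, the scaled copy $(r/r_J) J(K)$ is a centered ellipsoid of volume $r^n\kappa_n$; since $0 \in J(K) \subseteq K$ and $r/r_J \le 1$, the chain $(r/r_J) J(K) \subseteq J(K) \subseteq K$ holds, so the intersection with $K$ equals this ellipsoid and $m(r) \ge r^n\kappa_n$. The reverse inequality is trivial (the intersection cannot exceed the ellipsoid in volume), giving $m(r) = r^n\kappa_n$. The case $r \ge r_L$ is symmetric: $(r/r_L) L(K)$ has volume $r^n\kappa_n$ and contains $K$, so its intersection with $K$ is $K$ itself.

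For (3), the lower bound $m(t) \le m(s)$ for $t \le s$ follows by scaling: any centered ellipsoid $\mathcal{E}$ of volume $t^n\kappa_n$ satisfies $\mathcal{E} \subseteq (s/t)\mathcal{E}$, an ellipsoid of volume $s^n\kappa_n$, so $K \cap \mathcal{E} \subseteq K \cap (s/t)\mathcal{E}$. The upper bound exploits the central symmetry of $K$: since $0 \in K$ and $s/t \ge 1$ we have $K \subseteq (s/t) K$. Given any centered ellipsoid $\mathcal{E}'$ of volume $s^n\kappa_n$, set $\mathcal{E} = (t/s)\mathcal{E}'$; then pointwise one checks
\[
K \cap \mathcal{E}' \;\subseteq\; (s/t)\bigl(K \cap \mathcal{E}\bigr),
\]
so $\vol_n(K \cap \mathcal{E}') \le (s/t)^n m(t)$, and the supremum over $\mathcal{E}'$ yields $m(s) \le (s/t)^n m(t)$. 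Continuity is then immediate from the squeeze $m(t) \le m(s) \le (s/t)^n m(t)$ as $s \to t^+$, with the symmetric inequality handling the left-hand limit.

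For (2), fix $r_J \le t < s \le r_L$ and choose a centered maximum intersection ellipsoid $\mathcal{E}$ of volume $t^n\kappa_n$ (Lemma~\ref{lem:max-ellipsoid-exists}). Set $\mathcal{E}' = (s/t)\mathcal{E}$, of volume $s^n\kappa_n$; since $0 \in \mathrm{int}(\mathcal{E})$ and $s/t > 1$, in fact $\mathcal{E} \subseteq \mathrm{int}(\mathcal{E}')$. Because $t < r_L$, $\mathcal{E}$ cannot contain $K$ (else $\vol_n(\mathcal{E}) \ge \vol_n(L(K)) = r_L^n\kappa_n$ would force $t \ge r_L$), so we may pick some $y \in K \setminus \mathcal{E}$. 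The open segment from $0$ to $y$ lies in $\mathrm{int}(K)$ by convexity and the fact that $0 \in \mathrm{int}(K)$. It exits $\mathcal{E}$ at a point $p \in \partial\mathcal{E} \subseteq \mathrm{int}(\mathcal{E}')$, and continuity produces a nondegenerate subsegment just past $p$ lying in $\mathrm{int}(\mathcal{E}') \setminus \mathcal{E}$. Every point of this subsegment has an $\mathbb{R}^n$-neighbourhood inside $\mathrm{int}(K) \cap \bigl(\mathrm{int}(\mathcal{E}') \setminus \mathcal{E}\bigr) \subseteq K \cap (\mathcal{E}' \setminus \mathcal{E})$, so the latter set has positive Lebesgue measure. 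Hence $\vol_n(K \cap \mathcal{E}') > \vol_n(K \cap \mathcal{E}) = m(t)$, and a fortiori $m(s) > m(t)$.

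The only subtle step is the last one, upgrading a nondegenerate line segment in $K \cap (\mathcal{E}' \setminus \mathcal{E})$ to a set of positive Lebesgue measure. The argument is driven by $0 \in \mathrm{int}(\mathcal{E}) \cap \mathrm{int}(K)$, guaranteed by central symmetry, which produces both the nontrivial ellipsoidal shell $\mathrm{int}(\mathcal{E}') \setminus \mathcal{E}$ and a thickening of the transversal segment inside $K$.
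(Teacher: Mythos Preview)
Your proof is correct and follows essentially the same approach as the paper: the same scaling comparisons $\mathcal{E}_t \subset \tfrac{s}{t}\mathcal{E}_t$ and $\tfrac{t}{s}\mathcal{E}_s \subset \mathcal{E}_s$ drive parts (2) and (3), and part (1) is handled by the obvious John/Loewner competitors. The only difference is that you spell out the strictness argument in (2) in more detail than the paper (which just asserts that $K\cap\mathcal{E}_t = K\cap\tfrac{s}{t}\mathcal{E}_t$ forces $K\subset\mathcal{E}_t$ or $\tfrac{s}{t}\mathcal{E}_t\subset K$), and you reorder to prove (3) before (2); neither change is substantive.
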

	\begin{proof}
		Fact (1) is trivial. For (2) let $r_{J}\leq t<s\leq r_{L}$ and choose
		some intersection maximizing ellipsoid $\mathcal{E}_{t}$. Then 
		\[
		m(t)=\vol_{n}(K\cap\mathcal{E}_{t})\leq\vol_{n}\left(K\cap\frac{s}{t}\mathcal{E}_{t}\right)\leq\vol_{n}\left(K\cap\mathcal{E}_{s}\right).
		\]
		
		If the last inequality is an equality then $K\cap\mathcal{E}_{t}=K\cap\frac{s}{t}\mathcal{E}_{t}$
		which is only possible if $K\subset\mathcal{E}_{t}$ (which is impossible
		since $t<r_{L}$) or if $\frac{s}{t}\mathcal{E}_{t}\subset K$ (which
		is impossible since $s>r_{J}$). \\
		To prove (3) it is enough to show the right hand side inequality
		and to this end simply note that 
		\begin{eqnarray*}
			m(t) & = & \vol_{n}(K\cap\mathcal{E}_{t})\geq\vol_{n}\left(K\cap\frac{t}{s}\mathcal{E}_{s}\right)\\
			& \geq & \vol_{n}\left(\frac{t}{s}K\cap\frac{t}{s}\mathcal{E}_{s}\right)=\left(\frac{t}{s}\right)^{n}\vol_{n}\left(K\cap\mathcal{E}_{s}\right)=\left(\frac{t}{s}\right)^{n}m(s).
		\end{eqnarray*}
	\end{proof}
	
	By continuity of $m(r)$, we have:
	
	\begin{lem}
		\label{lem-convergence-of-ellipse-toJohn} Let $K\subset\mathbb{R}^n$ be a centrally symmetric convex body. As $r\searrow r_{J}$ the ellipsoids ${\cal E}_{r}$ converge to $\mathcal{E}_{r_{J}}=J(K)$
		in the Hausdorff distance. 
	\end{lem}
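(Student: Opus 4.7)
The plan is to use a compactness argument based on Blaschke selection, analogous to the existence proof of Lemma~\ref{lem:max-ellipsoid-exists}, combined with the uniqueness of the John ellipsoid.

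First I would fix an arbitrary sequence $r_{j}\searrow r_{J}$ and, for each $j$, a choice of maximum intersection ellipsoid $\mathcal{E}_{r_{j}}=T_{j}B_{2}^{n}$ with $T_{j}$ symmetric positive definite and $\det T_{j}=r_{j}^{n}$; for $j$ large enough $r_{J}<r_{j}<r_{L}$, so by the discussion in the proof of Lemma~\ref{lem:max-ellipsoid-exists} these ellipsoids may be taken centrally symmetric. I then show that the eigenvalues of $T_{j}$ lie in a compact subinterval of $(0,\infty)$: if some eigenvalue tended to $\infty$, then since $\det T_{j}=r_{j}^{n}$ stays bounded some other eigenvalue would tend to $0$, forcing $\vol_{n}(K\cap\mathcal{E}_{r_{j}})\to 0$. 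But continuity of $m$ (Lemma~\ref{lem-m(r)-continuous}) gives $\vol_{n}(K\cap\mathcal{E}_{r_{j}})=m(r_{j})\to m(r_{J})=r_{J}^{n}\kappa_{n}>0$, a contradiction. Hence all $\mathcal{E}_{r_{j}}$ lie in some fixed Euclidean ball $RB_{2}^{n}$ and contain some fixed ball $\rho B_{2}^{n}$ with $\rho>0$.

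Next, by Blaschke's selection theorem every subsequence of $\{\mathcal{E}_{r_{j}}\}$ has a further subsequence $\mathcal{E}_{r_{j_{k}}}$ converging in Hausdorff distance to some centrally symmetric convex body $\mathcal{E}^{*}$. The uniform two-sided eigenvalue bounds force $\mathcal{E}^{*}$ to be a nondegenerate centrally symmetric ellipsoid, of volume $\lim_{k}r_{j_{k}}^{n}\kappa_{n}=r_{J}^{n}\kappa_{n}$. Continuity of volume under Hausdorff convergence of convex bodies with nondegenerate limit, applied to $K\cap\mathcal{E}_{r_{j_{k}}}\to K\cap\mathcal{E}^{*}$, combined with Lemma~\ref{lem-m(r)-continuous}, then yields
\[
\vol_{n}(K\cap\mathcal{E}^{*})=\lim_{k\to\infty}\vol_{n}(K\cap\mathcal{E}_{r_{j_{k}}})=\lim_{k\to\infty}m(r_{j_{k}})=m(r_{J})=r_{J}^{n}\kappa_{n}=\vol_{n}(\mathcal{E}^{*}).
\]
This forces $\mathcal{E}^{*}\subseteq K$, so $\mathcal{E}^{*}$ is an ellipsoid contained in $K$ whose volume equals that of the John ellipsoid $J(K)$; by uniqueness of the John ellipsoid, $\mathcal{E}^{*}=J(K)$. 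Since every subsequence admits a further subsequence converging to the same limit $J(K)$, the entire family $\mathcal{E}_{r}$ converges in Hausdorff distance to $J(K)$ as $r\searrow r_{J}$.

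The main obstacle is the careful use of continuity of the intersection volume under Hausdorff convergence; the uniform eigenvalue bounds established in the first step are what rule out a degenerate limit on which volume could drop, so the standard continuity of volume on the space of nondegenerate convex bodies applies. Everything else amounts to compactness plus the classical uniqueness of $J(K)$.
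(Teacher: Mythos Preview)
Your proof is correct and follows essentially the same approach as the paper: compactness via an eigenvalue bound (using that $m(r_j)\to m(r_J)>0$), Blaschke selection, identification of the limit as $J(K)$ via the equality $\vol_n(K\cap\mathcal{E}^*)=\vol_n(\mathcal{E}^*)$ and uniqueness of the John ellipsoid, and the sub-subsequence argument. If anything, you are more explicit than the paper about the two-sided eigenvalue bounds and about why the intersection volume passes to the limit.
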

	\begin{proof}
		Since $\vol_{n}(K\cap J(K))=\vol_{n}(J(K))$ then by the continuity
		of $m(r)$, both $\vol_{n}(K\cap\mathcal{E}_{r})$ and $\vol_{n}(\mathcal{E}_{r})$
		approach $m(r_{J})=r_{J}^{n}\kappa_{n}$ as $r\searrow r_{J}$. Let
		$T_r$ be a sequence of transformations such that $T_r\mathcal{E}_r=B_{2}^{n}$.
		As before, since ${\vol_{n}(K\cap T_{r}^{-1}B_{2}^{n})}\to m(r_J)$ then the set $\mathcal{E}_{r}$
		is contained in a compact set. We thus have a converging subsequence $\mathcal{E}_{r_{j}}\rightarrow\mathcal{E}$
		with $\vol_{n}(\mathcal{E})=\vol_{n}(K\cap\mathcal{E})=r_{J}^{n}\kappa_{n}$,
		so $\mathcal{E}$ is an ellipsoid contained in $K$ with the same
		volume as $J(K)$, which is unique. It follows that $\mathcal{E}=J(K)$.
		Since this was true for any converging subsequence, we get that $\mathcal{E}_{r}$
		converges to $J(K)$ as $r\searrow r_{J}$. 
	\end{proof}
	
	We will make use of the following fact. The proof is a simple exercise, see e.g. \cite{Artstein-Avidan2015}:
	\begin{lem}\label{lem:isotropic-traceless-matrix}
		A Borel measure $\mu$ on $S^{n-1}$ is isotropic if and only if every $A\in M_n(\mathbb{R})$ such that $\textrm{tr}A=0$ has 
		\[
		\int_{S^{n-1}}\left\langle x,Ax\right\rangle d\mu (x) = 0.
		\]
	\end{lem}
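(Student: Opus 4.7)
The plan is to reformulate both conditions in terms of the single symmetric matrix
\[
M = \int_{S^{n-1}} x x^{T} \, d\mu(x),
\]
which encodes the second moments of $\mu$. Using the cyclic property of the trace, one has the identity $\langle x, Ax\rangle = x^{T} A x = \mathrm{tr}(A\, xx^{T})$, and hence by linearity
\[
\int_{S^{n-1}} \langle x, A x\rangle \, d\mu(x) = \mathrm{tr}(A M)
\]
for every $A\in M_n(\mathbb{R})$. Moreover, since $x^{T}Ax$ is a scalar, it equals $x^{T}A^{T}x$, so only the symmetric part of $A$ contributes; in particular, in each direction of the argument it suffices to consider symmetric $A$.

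Next I would translate isotropy into a statement about $M$. The defining relation $\int\langle x,\theta\rangle^{2} d\mu = \mu(S^{n-1})/n$ reads $\theta^{T} M \theta = \mu(S^{n-1})/n$ for every unit $\theta$, which (since $M$ is symmetric) is equivalent to $M = \frac{\mu(S^{n-1})}{n}\, \mathrm{Id}$.

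The two directions are then immediate. If $\mu$ is isotropic, then $M$ is a scalar multiple of the identity, and $\mathrm{tr}(AM) = \frac{\mu(S^{n-1})}{n}\mathrm{tr}(A) = 0$ whenever $\mathrm{tr}(A)=0$. Conversely, suppose $\mathrm{tr}(AM) = 0$ for every traceless $A$. With respect to the Hilbert--Schmidt inner product $\langle A,B\rangle = \mathrm{tr}(A B^{T})$, the space of traceless matrices is exactly the orthogonal complement of $\mathbb{R}\cdot \mathrm{Id}$; since the symmetric matrix $M$ is orthogonal to this entire subspace, it must lie in $\mathbb{R}\cdot \mathrm{Id}$, say $M = \lambda\, \mathrm{Id}$. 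Taking traces on both sides and using $|x|=1$ gives $n\lambda = \mathrm{tr}(M) = \int |x|^{2} d\mu = \mu(S^{n-1})$, hence $\lambda = \mu(S^{n-1})/n$, which is exactly the isotropy condition.

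There is no real obstacle here; the only subtlety is making explicit the passage from ``$\theta^{T} M \theta$ is constant on the unit sphere'' to ``$M$ is a scalar multiple of the identity'', which follows from the polarization identity $2\,\theta^{T} M \eta = (\theta+\eta)^{T}M(\theta+\eta) - \theta^{T}M\theta - \eta^{T}M\eta$ applied to a symmetric matrix.
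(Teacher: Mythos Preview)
Your argument is correct. The paper does not actually supply a proof of this lemma; it states it as a standard fact and refers the reader to \cite{Artstein-Avidan2015}, so there is no in-paper proof to compare against. Your approach via the second-moment matrix $M=\int xx^{T}\,d\mu$ and the Hilbert--Schmidt pairing is the standard one and is exactly what one would expect behind the reference.
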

	
	Lastly, the following theorem is essential for the proof of Theorem \ref{thm:max-intersection-isotropic}:
	\begin{thm}
		\label{thm:derivative-of-volume}Let $K\subset\mathbb{R}^{n}$ be
		a centrally symmetric convex body such that $\mbox{Vol}_{n-1}(\partial K\cap\partial\mathcal{E})=0$
		for all but finitely many ellipsoids $\mathcal{E}$, $\mbox{Vol}_{n-1}(\partial K\cap S^{n-1})=0$
		and $\mbox{Vol}_{n-1}(K\cap S^{n-1})>0$. Let $A\in M_{n}(\mathbb{R})$
		with $\mbox{tr}A=0$, and let $V(t):\mathbb{R}\rightarrow\mathbb{R}$
		be defined by $V(t)=\mbox{Vol}_{n}(K\cap e^{tA}B_{2}^{n})$ . If $K$
		is in maximal intersection position of radius 1, then 
		\[
		\left.\frac{dV(t)}{dt}\right|_{t=0}=\int_{S^{n-1}\cap K}\left\langle x,Ax\right\rangle dS(x)
		\]
		where $S=\vol_{n-1}$ is the surface area measure.
	\end{thm}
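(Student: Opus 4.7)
My approach is to compute $V'(0)$ directly using a polar representation of the intersection volume. Let $\rho_K(\theta) = \sup\{s > 0 : s\theta \in K\}$ be the radial function of $K$, and let $\rho_t(\theta) = \|e^{-tA}\theta\|^{-1}$ be the radial function of the ellipsoid $e^{tA} B_2^n$. Since both $K$ and $e^{tA} B_2^n$ are star-shaped at the origin, their intersection has radial function $r(\theta,t) = \min\{\rho_K(\theta), \rho_t(\theta)\}$, and the standard polar integration formula gives
\[
V(t) = \frac{1}{n}\int_{S^{n-1}} r(\theta,t)^n \, d\sigma(\theta).
\]
A direct Taylor expansion yields $\|e^{-tA}\theta\|^2 = 1 - 2t\langle A\theta,\theta\rangle + O(t^2)$ uniformly in $\theta \in S^{n-1}$, hence $\rho_t(\theta) = 1 + t\langle A\theta,\theta\rangle + O(t^2)$, and in particular $\partial_t|_{t=0}\rho_t(\theta)^n = n\langle A\theta,\theta\rangle$.

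Next I perform a pointwise analysis of $\partial_t|_{t=0} r(\theta,t)^n$ by decomposing $S^{n-1}$ into three disjoint pieces: the open set $U = \{\theta : \rho_K(\theta) > 1\} = S^{n-1}\cap\mathrm{int}(K)$, the open set $W = \{\theta : \rho_K(\theta) < 1\} = S^{n-1}\setminus K$, and the exceptional set $E = \{\theta : \rho_K(\theta) = 1\} = S^{n-1}\cap\partial K$, which has $\sigma(E) = 0$ by hypothesis. For $\theta \in U$ and $|t|$ small enough, the strict inequality $\rho_t(\theta) < \rho_K(\theta)$ holds, so $r(\theta,t) = \rho_t(\theta)$ and the pointwise derivative of $r^n$ at $t=0$ equals $n\langle A\theta,\theta\rangle$. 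For $\theta \in W$ and $|t|$ small enough, the opposite inequality holds, so $r(\theta,t) = \rho_K(\theta)$ is locally constant in $t$ and the derivative vanishes.

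To justify interchanging differentiation and integration I appeal to dominated convergence. Since $A$ is fixed, $\rho_t(\theta)$ is smooth in $t$ with derivative bounded uniformly in $\theta \in S^{n-1}$ on any compact interval around $0$, and $\rho_K$ is bounded above and below by positive constants (as $K$ is a centrally symmetric convex body, so $0 \in \mathrm{int}(K)$); it follows that $t \mapsto r(\theta,t)^n$ is Lipschitz in $t$ on a neighborhood of $0$ with Lipschitz constant uniform in $\theta$, so the difference quotients are uniformly bounded. Combining the pointwise limits,
\[
V'(0) = \frac{1}{n}\int_U n\langle A\theta,\theta\rangle \, d\sigma(\theta) = \int_{S^{n-1}\cap K}\langle x, Ax\rangle \, dS(x),
\]
where the second equality uses $\sigma(E) = 0$ to adjoin $E$ to the integration domain. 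The main obstacle is precisely the behavior at the exceptional set $E$: as Remark \ref{remark-thm-derivate} illustrates, if $\vol_{n-1}(\partial K \cap S^{n-1}) > 0$ then the above clean pointwise dichotomy breaks down and the formula can fail, so this hypothesis is essential to the argument and enters in the pointwise step above.
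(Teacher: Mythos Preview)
Your proof is correct and takes a genuinely different route from the paper's. The paper writes $V(t)=\int_{\RR^n}\mathbf{1}_{\mathrm{int}\,B_2^n}(e^{-tA}x)\mathbf{1}_{\mathrm{int}\,K}(x)\,dx$, replaces both indicators by smooth approximations $\phi_j,\psi_j$, differentiates under the integral, integrates by parts, and then has to prove a uniform-in-$t$ convergence statement (their Claim~\ref{claim-onemoreclaim}) via Dini's theorem. It is precisely this uniformity that forces them to assume $\vol_{n-1}(\partial K\cap\partial\mathcal{E})=0$ for all but finitely many ellipsoids, so that $\vol_{n-1}(S^{n-1}\cap\partial e^{tA}K)=0$ holds on a whole neighbourhood of $t=0$. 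Your polar-coordinate argument sidesteps this entirely: by working directly with the difference quotients of $r(\theta,t)^n=\min\{\rho_K(\theta),\rho_t(\theta)\}^n$ and observing that they are uniformly Lipschitz in $t$, you only need the pointwise derivative to exist $\sigma$-a.e.\ at $t=0$, which follows from the single hypothesis $\vol_{n-1}(\partial K\cap S^{n-1})=0$. So your argument is shorter, more elementary, and actually establishes the formula under strictly weaker assumptions than the theorem states. Neither proof uses the maximal-intersection-position hypothesis, which is indeed irrelevant to the derivative formula itself and is only invoked afterwards, in the proof of Theorem~\ref{thm:max-intersection-isotropic}, to conclude that $V'(0)=0$. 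What the paper's approach buys is that along the way it yields $V'(t)=\int_{S^{n-1}\cap e^{-tA}K}\langle x,Ax\rangle\,dS$ for all $t$ in a neighbourhood of $0$; but this is not needed for the stated theorem.
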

	
	We will see in the next section that Theorem \ref{thm:max-intersection-isotropic} is almost a direct corollary of Theorem \ref{thm:derivative-of-volume}. However, Remark \ref{remark-thm-derivate} shows that some caution is needed, and especially, the use of the assumption $\vol_{n-1}(\partial K \cap S^{n-1})=0$ should be identified. Therefore, while the following proof is basically a direct application of some fundamental results in calculus, we provide full details.

\begin{proof}[Proof of Theorem \ref{thm:derivative-of-volume}]

	Let $\left\{ \phi_{j}\right\} _{j=1}^{\infty}$ be a sequence of continuous
	functions from $\mathbb{R}^{n}$ to $\mathbb{R}$ approximating $\mathbf{1}_{\mbox{int}B_{2}^{n}}$,
	chosen as 
	\[
	\phi_{j}(x)=\begin{cases}
	1 & |x|\leq1-\frac{1}{j}\\
	g_{j}(x) & 1-\frac{1}{j}\leq|x|\leq1\\
	0 & |x|\geq1
	\end{cases}
	\]
	where $g_{j}(x):\mathbb{R}^{n}\rightarrow[0,1]$ is chosen such that
	$\phi_{j}(x)$ is continuously differentiable and there is a constant
	$c$ such that $0<|\nabla\phi_{j}(x)|<jc$ for all $x$. For instance
	we may take $g_{j}(x)=\frac{1}{2}-\frac{1}{2}\cos j\pi(|x|-1)$ to
	have $\nabla g_{j}(x)=\frac{j\pi x}{2|x|}\sin j\pi\left(|x|-1\right)$
	.
	
	Similarly, let $\psi_{j}(x)$ be a family of functions approximating
	$\mathbf{1}_{\mbox{int}K}$, chosen as
	
	\[
	\psi_{j}(x)=\begin{cases}
	1 & \left\Vert x\right\Vert _{K}\leq1-\frac{1}{\sqrt{j}}\\
	h_{j}(x) & 1-\frac{1}{\sqrt{j}}\leq\left\Vert x\right\Vert _{K}\leq1\\
	0 & \left\Vert x\right\Vert _{K}\geq1
	\end{cases}
	\]
	where $h_{j}(x):\mathbb{R}^{n}\rightarrow[0,1]$, $\psi(x)$ is continuously
	differentiable, and $0<|\nabla\psi_{j}(x)|<c\sqrt{j}$ for all $x$.
	
	As $j\rightarrow\infty$, $\phi_{j}(x)$ converges pointwise to $\mathbf{1}_{\mbox{int}B_{2}^{n}}$
	and $\psi_{j}(x)$ converges pointwise to $\mathbf{1}_{\mbox{int}K}$.
	We have then: 
	\begin{eqnarray*}
		\left.\frac{d}{dt}\right|_{t=0}V(t) & = & \left.\frac{d}{dt}\right|_{t=0}\int_{\mathbb{R}^{n}}\mathbf{1}_{\mbox{int}B^{n}}\left(e^{-tA}x\right)\mathbf{1}_{\mbox{int}K}(x)\\
		& = & \left.\frac{d}{dt}\right|_{t=0}\int_{\mathbb{R}^{n}}\underset{j\rightarrow\infty}{\lim}\phi_{j}\left(e^{-tA}x\right)\psi_{j}(x)dx.
	\end{eqnarray*}
	
	We will show that the following hold in a neighborhood of $t=0$:
	\begin{eqnarray}
	\frac{d}{dt}\int_{\mathbb{R}^{n}}\underset{j\rightarrow\infty}{\lim}\phi_{j}\left(e^{-tA}x\right)\psi_{j}(x)dx=\frac{d}{dt}\underset{j\rightarrow\infty}{\lim}\int_{\mathbb{R}^{n}}\phi_{j}\left(e^{-tA}x\right)\psi_{j}(x)dx\label{eq-w-1}\\
	\frac{d}{dt}\underset{j\rightarrow\infty}{\lim}\int_{\mathbb{R}^{n}}\phi_{j}\left(e^{-tA}x\right)\psi_{j}(x)dx=\underset{j\rightarrow\infty}{\lim}\frac{d}{dt}\int_{\mathbb{R}^{n}}\phi_{j}\left(e^{-tA}x\right)\psi_{j}(x)dx\label{eq-w-2}\\
	\frac{d}{dt}\int_{\mathbb{R}^{n}}\phi_{j}\left(e^{-tA}x\right)\psi_{j}(x)dx=\int_{\mathbb{R}^{n}}\left\langle \nabla\phi_{j}(x),-\psi_{j}(e^{tA}x)Ax\right\rangle dx\label{eq-w-3}\\
	\underset{j\rightarrow\infty}{\lim}\int_{\mathbb{R}^{n}}\left\langle \nabla\phi_{j}(x),-\psi_{j}(e^{tA}x)Ax\right\rangle dx=\int_{S^{n-1}\cap e^{-tA}K}\left\langle x,Ax\right\rangle dS\label{eq-w-4}
	\end{eqnarray}
	The equality \eqref{eq-w-1} is a direct consequence of Lebesgue's
	dominated convergence theorem. For \eqref{eq-w-3}, note that 
		\[
		\frac{d}{dt}\phi_{j}\left(e^{-tA}x\right)\psi_{j}(x)dx=\left\langle \nabla\phi_{j}(e^{-tA}x),-\psi_{j}(x)Ae^{-tA}x\right\rangle 
		\]
and that by Leibniz's integral rule,  \[\frac{d}{dt}\int_{\mathbb{R}^{n}}\phi_{j}\left(e^{-tA}x\right)\psi_{j}(x)dx=\int_{\mathbb{R}^{n}}\frac{d}{dt}\phi_{j}\left(e^{-tA}x\right)\psi_{j}(x)dx.\] 
It follows that for every fixed $j\in\mathbb{N}$ (recall $trA=0$),
	\begin{eqnarray*}
		\frac{d}{dt}\int_{\mathbb{R}^{n}}\phi_{j}\left(e^{-tA}x\right)\psi_{j}(x)dx & = & \int_{\mathbb{R}^{n}}\left\langle \nabla\phi_{j}(e^{-tA}x),-\psi_{j}(x)Ae^{-tA}x\right\rangle dx\\
		& = & \int_{\mathbb{R}^{n}}\left\langle \nabla\phi_{j}(x),-\psi_{j}(e^{tA}x)Ax\right\rangle dx,
	\end{eqnarray*}
	proving \eqref{eq-w-3}.
	
	To prove \eqref{eq-w-2} and \eqref{eq-w-4}, it is enough to show the following:
	
	\begin{claim}
		\label{claim-onemoreclaim} There is a neighborhood of $t=0$ where the function $\frac{d}{dt}f_{j}(t)=\frac{d}{dt}\int_{\mathbb{R}^{n}}\phi_{j}\left(e^{-tA}x\right)\psi_{j}(x)dx$
		converges uniformly to $\int_{S^{n-1}\cap e^{-tA}K}\left\langle x,Ax\right\rangle dS$. 
	\end{claim}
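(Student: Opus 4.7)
The plan is to pass to spherical coordinates $x = r\theta$, exploiting that $\nabla\phi_j(x) = g_j'(|x|)\,x/|x|$ is purely radial and supported in the shell $\{1 - 1/j \le |x| \le 1\}$. Using $\langle x, Ax\rangle = r^2\langle \theta, A\theta\rangle$ and $dx = r^{n-1}\,dr\,d\sigma(\theta)$, a direct computation gives
\[
\frac{d}{dt}f_j(t) = -\int_{S^{n-1}}\langle \theta, A\theta\rangle\, G_j(\theta, t)\, d\sigma(\theta),
\qquad
G_j(\theta, t) := \int_{1-1/j}^{1} g_j'(r)\, r^n\, \psi_j(e^{tA}r\theta)\, dr.
\]
Since $\int_{1-1/j}^1 g_j'(r)\,dr = g_j(1) - g_j(1-1/j) = -1$, the weights $g_j'(r)\,dr$ form a signed approximate identity concentrated at $r = 1$, so heuristically $G_j(\theta, t) \to -\mathbf{1}_{e^{-tA}K}(\theta)$ as $j \to \infty$.

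Next I would establish pointwise convergence. For $\theta$ with $e^{tA}\theta \notin \partial K$, continuity of the Minkowski functional $\|\cdot\|_K$ provides an open neighborhood around $e^{tA}\theta$ on which $\|\cdot\|_K$ is bounded away from $1$. Consequently, for all sufficiently large $j$, $\psi_j(e^{tA}r\theta)$ is identically $1$ or identically $0$ throughout $r \in [1-1/j, 1]$, and a single integration by parts yields $G_j(\theta, t) \to -\mathbf{1}_{e^{-tA}K}(\theta)$ with error $O(1/j)$ coming from the factor $r^n$ versus $1$. The exceptional set $N_t := \{\theta \in S^{n-1} : e^{tA}\theta \in \partial K\}$ has $\sigma$-measure zero at $t=0$ by hypothesis, and for small $t \ne 0$ by the assumption $\vol_{n-1}(\partial K \cap \partial\mathcal{E}) = 0$ for all but finitely many $\mathcal{E}$, once the neighborhood of $0$ is chosen to avoid the finitely many bad $t$-values. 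Dominated convergence, with $|G_j|$ bounded by $1$, then produces the pointwise limit $\int_{S^{n-1}\cap e^{-tA}K}\langle x, Ax\rangle\, dS$.

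For uniform convergence on a small neighborhood of $0$, I would estimate
\[
\left|\frac{d}{dt}f_j(t) - \int_{S^{n-1}\cap e^{-tA}K}\langle x, Ax\rangle\, dS\right|
\le \|A\|_{\mathrm{op}}\int_{S^{n-1}}\bigl|G_j(\theta,t) + \mathbf{1}_{e^{-tA}K}(\theta)\bigr|\, d\sigma(\theta),
\]
and split the sphere into an ``interior'' part $S^{n-1}\setminus N_t^\eta$ and a boundary tube $N_t^\eta := \{\theta : \mathrm{dist}(e^{tA}\theta, \partial K) \le \eta\}$. On the interior part, the pointwise argument upgrades to an $O(1/j)$ bound that is uniform in $\theta$ and $t$ once $j$ is large enough, with implicit constants depending only on $\eta$; on $N_t^\eta$ the integrand is bounded by $2$, contributing at most $2\sigma(N_t^\eta)$.

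The main obstacle will be showing $\sup_t \sigma(N_t^\eta) \to 0$ as $\eta \to 0$ on a sufficiently small neighborhood of $0$. I would argue by contradiction: were this supremum to remain bounded below by some $c > 0$, a compactness argument would furnish a limit point $t^*$ with $\sigma(\{\theta : e^{t^*A}\theta \in \partial K\}) \ge c$, i.e.\ the ellipsoid $e^{t^*A}B_2^n$ would meet $\partial K$ in a set of positive $(n-1)$-dimensional Hausdorff measure. The hypothesis on ellipsoid intersections forces $t^*$ to lie in a finite set, and since $t = 0$ is not exceptional by assumption, shrinking the neighborhood to avoid those finitely many values delivers the required uniform smallness and completes the argument.
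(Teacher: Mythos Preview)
Your argument is correct and follows a genuinely different route from the paper's. The paper applies the divergence theorem on the shell $M_j=\{1-\tfrac{1}{j}\le |x|\le 1\}$, producing a boundary term on $\partial M_j$ and a bulk term $\int_{M_j}\phi_j\,\langle\nabla\psi_j(e^{tA}x),Ax\rangle\,dx$; the latter is where the $\sqrt{j}$-width of the transition layer of $\psi_j$ is used, since $|\nabla\psi_j|\lesssim\sqrt{j}$ combined with $\vol_n(M_j)\sim n/j$ forces this term to vanish. The boundary term is then rescaled to $S^{n-1}$ and compared to the target via a three-set decomposition $S_1\cup S_2\cup S_3$, with Dini's theorem handling the uniform smallness of the transition set $S_3(j,t)$. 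By contrast, you exploit the radial structure $\nabla\phi_j(x)=g_j'(|x|)\,x/|x|$ to pass directly to spherical coordinates, obtaining the one-dimensional kernel $G_j(\theta,t)$; since $g_j$ is monotone with $\int g_j'=-1$, this kernel acts as a signed approximate identity and $\nabla\psi_j$ never appears, so your argument is insensitive to the precise width of $\psi_j$'s transition layer. For the uniform-in-$t$ control of the boundary tube you replace Dini by a compactness/contradiction argument, which is equally valid. Both the paper's use of Dini and your compactness step are exactly the places where the hypotheses $\vol_{n-1}(\partial K\cap S^{n-1})=0$ and $\vol_{n-1}(\partial K\cap\partial\mathcal{E})=0$ for all but finitely many $\mathcal{E}$ enter.
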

	
\noindent	\emph{Proof.}
		Denote 
		\[
		M_j:=\left\{ x:1-\frac{1}{j}\leq|x|\leq1\right\} \supset\text{supp}\nabla\phi_{j}(x).
		\]
		Then 
		\[
		\int_{\mathbb{R}^{n}}\left\langle \nabla\phi_{j}(x),-\psi_{j}(e^{tA}x)Ax\right\rangle dx=\int_{M_j}\left\langle \nabla\phi_{j}(x),-\psi_{j}(e^{tA}x)Ax\right\rangle dx.
		\]
		The functions $\phi_{j}(x),-\psi_{j}(e^{tA}x)Ax$ are continuously
		differentiable, $\partial M_j$ is smooth, and so we may integrate
		by parts to have 
		\begin{eqnarray}
		&  & \int_{M_j}\left\langle \nabla\phi_{j}(x),-\psi_{j}(e^{tA}x)Ax\right\rangle dx=\label{eq-some-eq}\\
		&  & =\int_{\partial M_j}\phi_{j}\left(x\right)\left\langle \vec{n},-\psi_{j}(e^{tA}x)Ax\right\rangle dS+\int_{M_j}\phi_{j}\left(x\right)\mbox{div}(\psi_{j}(e^{tA}x)Ax)dx\nonumber 
		\end{eqnarray}
		where $\vec{n}$ is the outward unit normal of $M_j$. Note that 
		\[
		\mbox{div}(\psi_{j}(e^{tA}x)Ax)=\left\langle \nabla\psi_{j}(e^{tA}x),Ax\right\rangle +\psi_{j}(e^{tA}x)\mbox{div}Ax=
		\]
		\[
		=\left\langle \nabla\psi_{j}(e^{tA}x),Ax\right\rangle +\psi_{j}(e^{tA}x)\text{tr}A=\left\langle \nabla\psi_{j}(e^{tA}x),Ax\right\rangle 
		\]
		and so 
		\[
		\left|\int_{M(j)}\phi_{j}\left(x\right)\mbox{div}(\psi_{j}(e^{tA}x)Ax)dx\right|\leq\int_{M(j)}\left|\left\langle \nabla\psi_{j}(e^{tA}x),Ax\right\rangle \right|dx.
		\]
		There is a constant $c$ such that 
		\[
		\int_{M_j}\left|\left\langle \nabla\psi_{j}(e^{tA}x),Ax\right\rangle \right|dx\leq c\sqrt{j}\mbox{Vol}_{n}(M(j))=c\kappa_{n}\sqrt{j}\left(1-\left(\frac{j-1}{j}\right)^{n}\right)\underset{j\rightarrow\infty}{\rightarrow}0.
		\]
		Going back to \eqref{eq-some-eq}, we have shown that $\int_{M_j}\phi_{j}\left(x\right)\mbox{div}(\psi_{j}(e^{tA}x)Ax)dx$
		converges uniformly to $0$. As for $\int_{\partial M_j}\phi_{j}\left(x\right)\left\langle \vec{n},-\psi_{j}(e^{tA}x)Ax\right\rangle dS$,
		note that 
		\[
		\partial M_j=S^{n-1}\cup\frac{j-1}{j}S^{n-1}
		\]
		where $\phi_{j}\left(x\right)=0$ on $S^{n-1}$, and $\phi_{j}\left(x\right)=1$
		on $\frac{j-1}{j}S^{n-1}$. For every $x\in\frac{j-1}{j}S^{n-1}$,
		the outer unit normal $\vec{n}$ of $M(j)$ is $-\frac{j}{j-1}x$,
		and so: 
		\begin{eqnarray*}
			\int_{\partial M_j}\phi_{j}\left(x\right)\left\langle \vec{n},-\psi_{j}(e^{tA}x)Ax\right\rangle dS & = & \int_{\frac{j-1}{j}S^{n-1}}\psi_{j}(e^{tA}x)\left\langle \vec{n},-Ax\right\rangle dS\\
			=\frac{j}{j-1}\int_{\frac{j-1}{j}S^{n-1}}\psi_{j}(e^{tA}x)\left\langle x,Ax\right\rangle dS & = & \left(\frac{j-1}{j}\right)^{n}\int_{S^{n-1}}\psi_{j}\left(\frac{j-1}{j}e^{tA}x\right)\left\langle x,Ax\right\rangle dS.
		\end{eqnarray*}
		We will show that there is some sequence $\xi(j)\rightarrow0$ and
		some $\delta>0$ such that for every $\left|t\right|<\delta$, 
		\[
		\left|\int_{S^{n-1}}\left(\frac{j-1}{j}\right)^{n}\psi_{j}\left(\frac{j-1}{j}e^{tA}x\right)\left\langle x,Ax\right\rangle dS-\int_{S^{n-1}}\mathbf{1}_{K}(e^{tA}x)\left\langle x,Ax\right\rangle dS\right|\leq\xi(j).
		\]
		
		Denote $\nu_{j}(x)=\left(\frac{j-1}{j}\right)^{n}\psi_{j}\left(\frac{j-1}{j}x\right)$,
		and consider 
		\begin{eqnarray*}
			\left|\int_{S^{n-1}}\nu_{j}(e^{tA}x)\left\langle x,Ax\right\rangle dS-\int_{S^{n-1}}\mathbf{1}_{K}(e^{tA}x)\left\langle x,Ax\right\rangle dS\right|\leq\\
			\leq c\int_{S^{n-1}}\left|\nu_{j}(e^{tA}x)-\mathbf{1}_{K}(e^{tA}x)\right|dS
		\end{eqnarray*}
		The set $S^{n-1}$ is a union of the following three sets: 
		\begin{eqnarray*}
			S_{1}(j,t) & = & \left\{ x\in S^{n-1}:\left\Vert e^{tA}x\right\Vert _{K}\geq\frac{j}{j-1}\right\} \\
			S_{2}(j,t) & = & \left\{ x\in S^{n-1}:\left\Vert e^{tA}x\right\Vert _{K}\leq1-\frac{1}{\sqrt{j}}\right\} \\
			S_{3}(j,t) & = & \left\{ x\in S^{n-1}:\frac{\sqrt{j}-1}{\sqrt{j}}\leq\left\Vert e^{tA}x\right\Vert _{K}\leq\frac{j}{j-1}\right\} .
		\end{eqnarray*}
		On $S_{1}$ we have that $\nu_{j}(x)=\mathbf{1}_{K}(e^{tA}x)=0$ and
		so $\int_{S_{1}(j,t)}\left|\nu_{j}(e^{tA}x)-\mathbf{1}_{K}(e^{tA}x)\right|dS=0$
		for all $j,t$. On $S_{2}$ we have that $\nu_{j}(e^{tA}x)=\left(\frac{j-1}{j}\right)^{n},\mathbf{1}_{K}(e^{tA}x)=1$
		and so: 
		\[
		\int_{S_{2}(j,t)}\left|\nu_{j}(e^{tA}x)-\mathbf{1}_{K}(e^{tA}x)\right|dS=\left|\left(\frac{j-1}{j}\right)^{n}-1\right|\mbox{Vol}_{n-1}(S_{2}(j,t)).
		\]
		There is a constant $c$ such that $\mbox{Vol}_{n-1}(S_{2}(j,t))\leq c$
		for all $j\in\mathbb{N}$ and for all $t\in[1,-1]$. It follows that
		\[
		\left|\left(\frac{j-1}{j}\right)^{n}-1\right|\mbox{Vol}_{n-1}(S_{2}(j,t))\leq\left|\left(\frac{j-1}{j}\right)^{n}-1\right|c\underset{j\to\infty}{\to}0.
		\]
		Finally, on $S_{3}$ we have that 
		\[
		\int_{S_{3}(j,t)}\left|\nu_{j}(e^{tA}x)-\mathbf{1}_{K}(e^{tA}x)\right|dS\leq\xi_{j}(t)
		\]
		where 
		\[
		\xi_{j}(t)=\vol_{n-1}\left(S_{3}(j,t)\right)=\mbox{Vol}_{n-1}\left\{ x\in S^{n-1}:\frac{\sqrt{j}-1}{\sqrt{j}}\leq\left\Vert e^{tA}x\right\Vert _{K}\leq\frac{j}{j-1}\right\} 
		\]
		is monotonically decreasing in $j$ for every fixed $t$. By Dini's
		theorem, $\xi_{j}(t)$ converges uniformly
		to $\xi(t)=\mbox{Vol}_{n-1}\left(S^{n-1}\cap\partial e^{tA}K\right)$.
		Assuming $\mbox{Vol}_{n-1}\left(S^{n-1}\cap\partial K\right)=0$ and
		$\mbox{Vol}_{n-1}\left(\partial\mathcal{E}\cap\partial K\right)=0$
		for all but finitely many ellipsoids, there is some $\delta>0$ such
		that $\xi(t)=\mbox{Vol}_{n-1}\left(S^{n-1}\cap\partial e^{tA}K\right)=0$
		for all $\left|t\right|<\delta$, and so on the set $|t|<\delta$,
		the sequence $\int_{S_{3}}\left|\nu_{j}(e^{tA}x)-\mathbf{1}_{K}(e^{tA}x)\right|dS$
		converges uniformly to $0$. This proves Claim \ref{claim-onemoreclaim}
		and with it Theorem \ref{thm:derivative-of-volume}. 
\end{proof}

\begin{rem}
	Note that the proof above shows that the conditions of Theorem \ref{thm:derivative-of-volume} (and therefore of Theorem \ref{thm:max-intersection-isotropic})
	may be slightly relaxed: in fact, we do not need $\vol_{n-1}(K\cap\mathcal{E})=0$
	for all but finitely many ellipsoids. It is enough to have a neighborhood
	$N\subset SL_{n}$ of $I_{n}$ such that $\vol_{n-1}(K\cap T\mathcal{E})=0$
	for all $T\in N$. 
\end{rem}

	\section{Proof of the main theorems}
	\label{sec-main-proofs}

In this section we use the results of Section \ref{sec-prelim} to provide short proofs to the three main Theorems \ref{thm:max-intersection-isotropic}, \ref{thm:measure-limit-john}, and \ref{thm:measure-limit-loewner}.

As we mentioned, the proof of Theorem \ref{thm:max-intersection-isotropic} follows almost directly from Theorem \ref{thm:derivative-of-volume}:
	
	\begin{proof}[Proof of Theorem \ref{thm:max-intersection-isotropic}]
		First note that $K$ is in maximal intersection position of radius $r$ if and only if $r^{-1}K$ is in maximal intersection position of radius $1$, and so it is enough to prove the theorem in the case $r=1$.
		
		Let $W:SL_{n}\rightarrow\mathbb{R}$, $W(T)=\mbox{Vol}_{n}(K\cap TB_{2}^{n})$.
		If $I_{n}$ is a local maximum of $W$, then for any $A\in M_{n}(\mathbb{R})$
		such that $\text{tr}A=0$, the derivative $\left.\frac{dW(e^{tA})}{dt}\right|_{t=0}=\left.\frac{dV(t)}{dt}\right|_{t=0}$
		is either zero or does not exist. Theorem \ref{thm:derivative-of-volume}
		states that the derivative does exist for all $A$, and it equals
		$\int_{S^{n-1}\cap K}\left\langle x,Ax\right\rangle dS(x)$.
		It follows that 
		\[\int_{S^{n-1}}\left\langle x,Ax\right\rangle d\mu_{K}=\frac{1}{\vol_{n-1}(S^{n-1}\cap K)}\int_{S^{n-1}\cap K}\left\langle x,Ax\right\rangle dS=0\]
		for all $A$ such that $\mbox{tr}A=0$, and by Lemma \ref{lem:isotropic-traceless-matrix}, $\mu_{K}$ is isotropic. 
	\end{proof}
	
	As we have mentioned, the result of Theorem \ref{thm:max-intersection-isotropic} resembles that of John's Theorem (Theorem \ref{thm:john}), but does not include it. However, Theorem \ref{thm:max-intersection-isotropic} provides a family of isotropic measures which are used in the proof of Theorem \ref{thm:measure-limit-john}:
	
	\begin{proof}[Proof of Theorem \ref{thm:measure-limit-john}]
		Let $r\searrow1$. By Lemma \ref{lem:max-ellipsoid-exists}, we may choose an intersection maximizing ellipsoid
		$\mathcal{E}_{r}$ for each $r$. By Lemma \ref{lem-convergence-of-ellipse-toJohn},
		$\mathcal{E}_{r}\to B_{2}^{n}$ and so we may choose a sequence of
		positive definite transformations $T_{r}\rightarrow I_{n}$ such that
		$B_{2}^{n}=T_{r}\mathcal{E}_{r}$. Then $T_{r}K$ is in maximal intersection
		position of radius $r$ and $\vol_{n-1}(\partial T_{r}K\cap S^{n-1})=0$
		for almost all $r$. By Theorem \ref{thm:max-intersection-isotropic}, the probability measures
		on the sphere 
		\[
		\mu_{r}(A)=\mu_{S^{n-1}\backslash T_{r}K}(A)=\frac{\sigma\left(A\backslash T_{r}K\right)}{\sigma\left(S^{n-1}\backslash T_{r}K\right)}
		\]
		are isotropic.
		
		Note that $S^{n-1}$ is a compact metric space, and so the family of measures $\mu_{r}$
		has a weakly converging subsequence $\mu_{j}\rightarrow\mu$ where
		$\mu$ is a probability measure on $S^{n-1}$. We will show that the limit measure $\mu$ is an isotropic measure
		whose support lies in $\partial K\cap S^{n-1}$.
		
		First, weak convergence implies 
		\[
		\int_{S^{n-1}}\left\langle x,\theta\right\rangle ^{2}d\mu_{j}(x)\rightarrow\int_{S^{n-1}}\left\langle x,\theta\right\rangle ^{2}d\mu(x)
		\]
		and 
		\[
		\frac{1}{n}=\frac{\mu_{j}(S^{n-1})}{n}\rightarrow\frac{\mu(S^{n-1})}{n}
		\]
		so $\int_{S^{n-1}}\left\langle x,\theta\right\rangle ^{2}d\mu(x)=\frac{\mu(S^{n-1})}{n}=\frac{1}{n}$,
		i.e., $\mu$ is isotropic.
		
		Second, let 
		\[
		U_{k}=\left\{ x\in S^{n-1}:d(x,\partial K)>\frac{1}{k}\right\} 
		\]
		where $d(\cdot,\cdot)$ is a metric on $S^{n-1}$. The measure $\mu_{j}$
		is supported on $S^{n-1}\backslash T_{r_{j}}K$ where $T_{r_{j}}K\rightarrow K$,
		and so there is $M$ such that for any $k>M$ there is some $N(k)$
		such that $\mu_{j}(U_{k})=0$ for all $j>N(k)$. Since $U_{k}$ is
		open, weak convergence implies $\mu(U_{k})\leq\liminf\mu_{j}(U_{k})=0$,
		so $\mu(U_{k})=0$ for all $k>M$. It follows that $\mu\left(\bigcup_{k=M}^{\infty}U_{k}\right)=\underset{k\rightarrow\infty}{\lim}\mu(U_{k})=0$,
		where 
		\[
		\bigcup_{k=M}^{\infty}U_{k}=\{x\in S^{n-1}:d(x,\partial K)>0\}=S^{n-1}\backslash\mbox{cl}\partial K=S^{n-1}\backslash\partial K.
		\]
		It follows that $\mu(S^{n-1}\backslash\partial K)=0$ and so $\text{supp}\mu\subset S^{n-1}\cap\partial K$. 
	\end{proof}

	The proof of Theorem \ref{thm:measure-limit-loewner} is analogous to that of Theorem \ref{thm:measure-limit-john},
	only here we use 
	\[
	\nu_{j}(A)=\mu_{T_{r_{j}}K}(A)=\frac{\sigma\left(A\cap T_{r_{j}}K\right)}{\sigma\left(S^{n-1}\cap T_{r_{j}}K\right)}
	\]
	which is isotropic by Theorem \ref{thm:max-intersection-isotropic}. In this
	case, it is the measures $\nu_{j}$ that satisfy $\nu_{j}(U_{k})=0$
	for all $j>N(k)$, rather than the measures $\mu_{j}$ . In other words,
	for a John-type measure we use a sequence of uniform measures ``outside''
	$T_{r_{j}}K$, whereas for a Loewner-type measure we use a sequence
	of uniform measures ``inside'' $T_{r_{j}}K$.

	\section{Remarks about uniqueness following from the (B) property}
	\label{section-uniquenessness}
	
	Throughout this text we discussed maximal intersection positions of a body $K$. While Lemma \ref{lem:max-ellipsoid-exists} 
	shows that such a position always exists, we did not show that this measure is unique. 
	If $0<r<r_{J}$ or $r>r_{L}$ then the maximum intersection ellipsoid $\mathcal{E}_{r}$ of radius $r$ is clearly not unique.
	If $r=r_{J}$ or $r=r_{L}$ then $\mathcal{E}_{r}$ is unique, by John's theorem. 
	The question of uniqueness remains open for the case $r_{J}<r<r_{L}$,
	but it is implied by a variant of a well known conjecture which we next discuss:
	\begin{conjecture}
		\label{conj-strongBwEq} For a convex body $K\subset\mathbb{R}^{n}$
		and a diagonal $n\times n$ matrix $\Lambda$, the function 
		\[
		\phi(t)=\vol_{n}\left(e^{t\Lambda}K\cap B_{2}^{n}\right)
		\]
		is log-concave in $t$, i.e. 
		\begin{equation}
		\vol_{n}\left(e^{\frac{t}{2}\Lambda}K\cap B_{2}^{n}\right)^{2}\geq\vol_{n}\left(e^{t\Lambda}K\cap B_{2}^{n}\right)\vol_{n}\left(K\cap B_{2}^{n}\right)\label{eq-Bconjeq}
		\end{equation}
		for all $t\in\mathbb{R}$ and all diagonal $\Lambda$. Furthermore,
		equality is attained if and only if one of the following hold: $K\subset B_{2}^{n}$,
		$B_{2}^{n}\subset K$, or $\Lambda=\lambda I_{n}$ for some $\lambda\in\mathbb{R}$. 
	\end{conjecture}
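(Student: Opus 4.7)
The plan is to reduce the global log-concavity of $\phi$ on $\mathbb{R}$ to the single infinitesimal inequality $\phi(0)\phi''(0)\leq\phi'(0)^{2}$ at $t=0$, and then attack that inequality via boundary integrals. The reduction is immediate: by the group law $e^{(s+t)\Lambda}=e^{s\Lambda}e^{t\Lambda}$, log-concavity of $\phi$ at any point $t_{0}$ is the same as log-concavity at $0$ for the body $e^{t_{0}\Lambda}K$, which is still a convex body satisfying the same hypotheses. So it suffices to prove the second-derivative inequality at the origin, for every admissible $K$ and every diagonal $\Lambda$.

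Next, express $\phi'(0)$ and $\phi''(0)$ as surface integrals over the two boundary pieces of $K\cap B_{2}^{n}$, namely $S^{n-1}\cap K$ and $\partial K\cap B_{2}^{n}$. For the first derivative, decompose $\Lambda=\Lambda_{0}+\tfrac{\textrm{tr}\,\Lambda}{n}I_{n}$ into its traceless and scalar parts and apply Theorem \ref{thm:derivative-of-volume} to $\Lambda_{0}$; the scalar part contributes a routine $e^{t\,\textrm{tr}\Lambda}$ factor outside the volume. For the second derivative, a more delicate second-order integration-by-parts argument --- differentiating twice and tracking the motion of both boundary pieces --- should produce $\phi''(0)$ as a sum of quadratic-form integrals over these two surfaces, weighted by the relevant local geometry (principal curvatures and co-normal angles along the edge $\partial K\cap S^{n-1}$).

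The crux, and the main obstacle, is to deduce the desired inequality from these boundary expressions. In essence, it is a Poincar\'e- / Brascamp--Lieb-type inequality on $K\cap B_{2}^{n}$ equipped with uniform measure, tested against the family of quadratic forms $x\mapsto\langle x,\Lambda x\rangle$; equivalently, it is the infinitesimal form of the $(B)$-property for uniform measure on $B_{2}^{n}$, which to our knowledge is open. The Gaussian analog was proved by Cordero-Erausquin, Fradelizi and Maurey via $L^{2}$ / Brascamp--Lieb techniques tied to the Ornstein--Uhlenbeck semigroup, but those methods do not appear to transfer to uniform measure on a ball. A direct Pr\'ekopa--Leindler attack is also obstructed: the function $(t,x)\mapsto\mathbf{1}_{K}(e^{-t\Lambda}x)\mathbf{1}_{B_{2}^{n}}(x)$ is typically not log-concave in $(t,x)$ (the set $\{(t,x):e^{-t\Lambda}x\in K\}$ is generally non-convex), so Pr\'ekopa's marginal theorem cannot be invoked directly.

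Finally, the equality analysis. Each of the three asserted families is easy to verify: if $K\subset B_{2}^{n}$ then for small $t$ we have $e^{t\Lambda}K\subset B_{2}^{n}$ and $\phi(t)=e^{t\,\textrm{tr}\Lambda}\vol_{n}(K)$ is log-linear; if $B_{2}^{n}\subset K$ then $\phi$ is locally constant; and for $\Lambda=\lambda I$ the statement reduces to log-concavity of $r\mapsto\vol_{n}(rK\cap B_{2}^{n})$ in $\log r$, a purely radial question accessible by direct Brunn--Minkowski arguments. Showing that these are the \emph{only} equality cases would follow from a careful equality analysis in the functional inequality used in the main step, and is likely no easier than establishing that inequality itself.
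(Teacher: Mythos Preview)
This statement is a \emph{conjecture} in the paper, not a theorem; the paper does not prove it and presents it explicitly as open (a strong $(B)$-type property for the uniform measure on $B_{2}^{n}$), using it only conditionally to deduce uniqueness of the maximal intersection ellipsoid. Your proposal is consistent with this status: you outline the natural reduction to a second-derivative inequality at $t=0$ and then correctly identify that inequality as the genuine obstacle, noting that neither Pr\'ekopa--Leindler nor the Gaussian Brascamp--Lieb machinery of Cordero-Erausquin, Fradelizi and Maurey appears to transfer. So there is no ``paper's own proof'' to compare against, and your write-up is, in effect, an honest acknowledgement that the main step is open --- which matches the paper.

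One concrete slip in your equality discussion: for $\Lambda=\lambda I_{n}$ you say the assertion ``reduces to log-concavity of $r\mapsto\vol_{n}(rK\cap B_{2}^{n})$ in $\log r$, a purely radial question accessible by direct Brunn--Minkowski arguments''. But the conjecture claims \emph{equality} in \eqref{eq-Bconjeq} in that case, which would mean log-\emph{linearity} of $t\mapsto\vol_{n}(e^{t\lambda}K\cap B_{2}^{n})$, not merely log-concavity; and log-linearity is false in general. For instance with $K=2B_{2}^{n}$ and $\lambda=1$ one has $\phi(t)=\kappa_{n}$ for $t\geq-\log 2$ and $\phi(t)=2^{n}e^{nt}\kappa_{n}$ for $t<-\log 2$, so $\log\phi$ is piecewise linear but not affine, and strict inequality holds in \eqref{eq-Bconjeq} for $t<-2\log 2$. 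Thus that case is not ``easy to verify'' as stated. In the paper's only application (the uniqueness proposition) one has $\textrm{tr}\,\Lambda=0$, so $\Lambda=\lambda I_{n}$ forces $\Lambda=0$ and this subtlety does not arise there.
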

	\begin{prop}
		Assuming Conjecture \ref{conj-strongBwEq} is true, if $K$ is a centrally
		symmetric convex body, the maximum intersection ellipsoid of radius $r$ is unique for
		$r_{J}<r<r_{L}$. 
	\end{prop}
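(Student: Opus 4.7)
The plan is to assume, for contradiction, that $K$ admits two distinct maximum intersection ellipsoids of radius $r$ with $r_J<r<r_L$. By Lemma~\ref{lem:max-ellipsoid-exists} and the accompanying remarks, both must be centrally symmetric in this range. After applying a suitable $SL_n$ transformation to $K$ (under which the maximum intersection property is equivariant, since volume is $SL_n$-invariant), we may assume one of them is $rB_2^n$ itself, so that $K$ is in maximum intersection position of radius $r$, and the other takes the form $rTB_2^n$ for some symmetric positive definite $T\in SL_n$ with $T\neq I$. Writing $T=Ue^\Lambda U^T$ via the spectral theorem, with $U$ orthogonal, $\Lambda$ diagonal, $\operatorname{tr}\Lambda=0$ and $\Lambda\neq 0$, the key object is the one-parameter family
\[
\phi(s)=\vol_n\bigl(K\cap rUe^{s\Lambda}B_2^n\bigr),
\]
which joins the two maximizers at $s=0$ and $s=1$.

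The crux is the log-concavity of $\phi$. Using $U^TB_2^n=B_2^n$ and the volume-preserving substitution $y=e^{-s\Lambda}U^Tx$, one rewrites
\[
\phi(s)=r^n\,\vol_n\bigl(e^{-s\Lambda}K_0\cap B_2^n\bigr),\qquad K_0=r^{-1}U^TK,
\]
and Conjecture~\ref{conj-strongBwEq} applied to $K_0$ with the diagonal matrix $-\Lambda$ yields log-concavity of $\phi$ in $s$. Since $\phi(0)=\phi(1)=m(r)$ and $\phi(s)\le m(r)$ for every $s$ by the very definition of $m(r)$, a log-concave function pinned to its upper bound at both endpoints of $[0,1]$ must equal that bound throughout the interval; in particular, equality holds in \eqref{eq-Bconjeq} at $t=1$ for the body $K_0$ and the matrix $-\Lambda$.

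By the equality clause of Conjecture~\ref{conj-strongBwEq}, one of three alternatives must then occur: $K_0\subset B_2^n$, equivalent to $K\subset rB_2^n$; $B_2^n\subset K_0$, equivalent to $rB_2^n\subset K$; or $-\Lambda=\lambda I_n$ for some $\lambda$, which combined with $\operatorname{tr}\Lambda=0$ forces $\Lambda=0$. The first alternative produces an ellipsoid of volume $r^n\kappa_n$ containing $K$, contradicting the minimality of $L(K)$ since $r<r_L$; the second produces an ellipsoid of volume $r^n\kappa_n$ contained in $K$, contradicting the maximality of $J(K)$ since $r>r_J$; the third contradicts $T\neq I$. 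The main obstacle in this plan is ensuring that the spectral and scaling reductions align the hypotheses of the conjecture with our setting (since the conjecture is stated for the Euclidean unit ball and a diagonal matrix); once this bookkeeping is handled carefully, each of the three equality cases is excluded by design, yielding the desired contradiction and hence uniqueness.
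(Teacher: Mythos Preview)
Your proof is correct and follows essentially the same route as the paper's. The paper is simply terser: it absorbs the scaling by $r$ and the orthogonal diagonalization into a single ``we may assume one ellipsoid is $B_2^n$ and the other is $e^\Lambda B_2^n$ with $\Lambda$ diagonal and $\operatorname{tr}\Lambda=0$,'' then applies Conjecture~\ref{conj-strongBwEq} at the midpoint to obtain $\vol_n(K\cap e^{\Lambda/2}B_2^n)\ge\vol_n(K\cap B_2^n)$, forces equality by maximality, and invokes the equality clause. Your explicit substitution $\phi(s)=r^n\vol_n(e^{-s\Lambda}K_0\cap B_2^n)$ with $K_0=r^{-1}U^TK$ makes transparent exactly how the hypotheses of the conjecture (unit ball, diagonal matrix) are met, which the paper leaves implicit.
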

	\begin{proof}
		Letting $r_{J}<r<r_{L}$, assume there are two distinct maximum intersection ellipsoid of radius $r$. We may assume that one of these ellipsoids is $B_{2}^{n}$,
		and the other is of the form $e^{\Lambda}B_{2}^{n}$ where $\Lambda$
		is a diagonal matrix with $\text{tr}\Lambda=0$. Conjecture \ref{conj-strongBwEq}
		now gives 
		\[
		\vol_{n}\left(K\cap e^{\frac{\Lambda}{2}}B_{2}^{n}\right)\geq\vol_{n}\left(K\cap B_{2}^{n}\right)
		\]
		where maximality of $B_{2}^{n}$ implies equality in the above. Since
		$r_{J}<r<r_{L}$, we have $K\nsubseteq B_{2}^{n}$ and $B_{2}^{n}\nsubseteq K$.
		It follows that $\Lambda$ is a traceless scalar matrix, i.e. $\Lambda$
		is the zero matrix and $e^{\Lambda}=I_{n}$. 
	\end{proof}
	Conjecture \ref{conj-strongBwEq} describes a (B)-type property on
	the Lebesgue measure on $B_{2}^{n}$, under the following terminology: 
	\begin{defn}
		Given a measure $\mu$ on $\mathbb{R}^{n}$ and a measurable set $K\subset\mathbb{R}^{n}$,
		we say that $\mu$ and $K$ have \emph{the} \emph{weak $(B)$ property}
		if the function 
		\[
		t\mapsto\mu(e^{t}K)
		\]
		is log-concave on $\mathbb{R}$. \\
		Denoting $\mbox{diag}(t_{1},...,t_{n})$ the diagonal matrix with
		diagonal entries $t_{1},...,t_{n}$, we will say that $\mu$ and $K$
		have \emph{the strong (B) property} if the function 
		\[
		(t_{1},...,t_{n})\mapsto\mu(e^{\mbox{diag}(t_{1},...,t_{n})}K)
		\]
		is log-concave on $\mathbb{R}^{n}$. 
	\end{defn}
	The notion of the (B) property arises from a problem proposed by Banaszczyk
	and described by Latala \cite{latala2003some} known as the (B) conjecture
	(now the (B) theorem), where, in the terminology as above, it was
	conjectured that the standard Gaussian probability measure $\gamma$
	on $\mathbb{R}^{n}$ and any centrally symmetric convex body $K\subset\mathbb{R}^{n}$
	have the weak (B) property. The (B) conjecture was solved by Cordero-Erausquin,
	Fradelizi, and Maurey \cite{cordero2004b}, where it was shown that
	$\gamma$ and $K$ have in fact a strong (B) property.
	
	Conjecture \ref{conj-strongBwEq} proposes that the uniform Lebesgue
	measure on $B_{2}^{n}$ and any centrally symmetric convex body have
	the strong (B) property, with further assumptions on the equality
	case.
	
	Unfortunately not a lot is known about the (B) property of general
	measures, and even less about the equality case. We will briefly mention
	what is currently known: Livne Bar-on \cite{bar2014b} showed that in $\mathbb{R}^{2}$,
	the uniform Lebesgue measure on a centrally symmetric convex body
	$L\subset\mathbb{R}^{2}$ has the weak (B) property with any centrally
	symmetric convex body $K\subset\mathbb{R}^{2}$. This result was generalized
	by Saroglou \cite{saroglou2015remarks}, where it was shown that if
	the log-Brunn-Minkowski inequality holds in dimension $n$, then the
	uniform probability measure on the $n-$dimensional cube has the strong
	(B) property, and the uniform probability measure of every centrally
	symmetric convex body has the weak (B) property, with any centrally
	symmetric convex body $K$.
	
	The log-Brunn-Minkowski inequality states that for two centrally symmetric
	convex bodies ${K,L\subset\mathbb{R}^{n}}$ and $\lambda\in[0,1]$,
	\begin{equation}
	\vol_{n}\left(\left(1-\lambda\right)K+_{o}\lambda L\right)\geq\vol_{n}(K)^{1-\lambda}\vol_{n}(L)^{\lambda}\label{eq-logBM}
	\end{equation}
	where 
	\[
	(1-\lambda)K+_{o}\lambda L=\bigcap_{u\in S^{n-1}}\left\{ x:\left\langle x,u\right\rangle \leq h_{K}(u)^{1-\lambda}h_{L}(u)^{\lambda}\right\} .
	\]
	
	It was shown by Böröczky, Lutwak, Yang, and Zhang \cite{boroczky2012log}
	that the log-Brunn-Minkowski inequality holds for $n=2$, and so together
	with \cite{saroglou2015remarks} the result of \cite{bar2014b} is
	implied.
	
	In a recent publication \cite{Saroglou2016}, Saroglou states that
	an unconditional log-concave measure $\mu$ and an unconditional body
	$K$ have the strong (B) property. For our purposes, it is enough
	to mention that the uniform measure on $B_{2}^{n}$ is unconditional
	log-concave. It follows that Conjecture \ref{conj-strongBwEq} (without
	the equality case) holds whenever $K$ is unconditional, i.e. $(x_{1},\ldots,x_{n})\in K$
	implies $(\delta_{1}x_{1},\ldots,\delta_{n}x_{n})\in K$ for any choice
	of $\delta_{i}\in\{-1,1\}$ where $i=1,...,n$.
	
	Still not a lot is known on equality cases in inequalities such as
	\eqref{eq-Bconjeq}. In \cite{Saroglou2016}, Saroglou expands further
	on the relationship between the (B) property and the log-Brunn-Minkowski,
	and conjectures that equality in \eqref{eq-logBM} is attained if
	and only if $K=K_{1}\times\ldots\times K_{m}$ for some convex sets
	$K_{1},\ldots K_{m}$ that cannot be written as cartesian products
	of lower dimensional sets, and $L=c_{1}K_{1}\times\ldots c_{m}K_{m}$
	for some positive numbers $c_{1},\ldots,c_{m}$. We have not found
	similar conjectures or results regarding the equality case in \eqref{eq-Bconjeq}.

	\bibliographystyle{plain}
	\bibliography{sources}

\begin{thebibliography}{10}
	
	\bibitem{Artstein-Avidan2015}
	S.~Artstein-Avidan, A.~Giannopoulos, and V.D. Milman.
	\newblock {\em Asymptotic Geometric Analysis, Part {I}}, volume 202.
	\newblock American Mathematical Soc., 2015.
	
	\bibitem{Ball1992}
	K.~Ball.
	\newblock Ellipsoids of maximal volume in convex bodies.
	\newblock {\em Geometriae Dedicata}, 41(2):241--250, 1992.
	
	\bibitem{boroczky2012log}
	K.J. B{\"o}r{\"o}czky, E.~Lutwak, D.~Yang, and G.~Zhang.
	\newblock The log-{B}runn-{M}inkowski inequality.
	\newblock {\em Advances in Mathematics}, 231(3):1974--1997, 2012.
	
	\bibitem{cordero2004b}
	D.~Cordero-Erausquin, M.~Fradelizi, and B.~Maurey.
	\newblock The {(B)} conjecture for the {G}aussian measure of dilates of
	symmetric convex sets and related problems.
	\newblock {\em Journal of Functional Analysis}, 214(2):410--427, 2004.
	
	\bibitem{Giannopoulos2000}
	A.~Giannopoulos and V.D. Milman.
	\newblock Extremal problems and isotropic positions of convex bodies.
	\newblock {\em Israel Journal of Mathematics}, 117(1):29--60, 2000.
	
	\bibitem{john1948extremum}
	F.~John.
	\newblock Extremum problems with inequalities as side constraints.
	\newblock {\em Studies and Essays, Courant Anniversary Volume}, pages 187--204,
	1948.
	
	\bibitem{latala2003some}
	R.~Lata{\l}a.
	\newblock On some inequalities for {G}aussian measures.
	\newblock {\em Proceedings of the International Congress of Mathematicians,
		Beijing}, II, 2002.
	
	\bibitem{bar2014b}
	A.~Livne Bar-on.
	\newblock The {(B)} conjecture for uniform measures in the plane.
	\newblock In {\em Geometric Aspects of Functional Analysis}, pages 341--353.
	Springer, 2014.
	
	\bibitem{milman1986inegalite}
	V.D. Milman.
	\newblock In{\'e}galit{\'e} de {B}runn-{M}inkowski inverse et applicationsa la
	th{\'e}orie locale des espaces norm{\'e}s.
	\newblock {\em CR Acad. Sci. Paris}, 302(1):25--28, 1986.
	
	\bibitem{saroglou2015remarks}
	C.~Saroglou.
	\newblock Remarks on the conjectured log-{B}runn-{M}inkowski inequality.
	\newblock {\em Geometriae Dedicata}, 177(1):353--365, 2015.
	
	\bibitem{Saroglou2016}
	C.~Saroglou.
	\newblock More on logarithmic sums of convex bodies.
	\newblock {\em Mathematika}, 62(03):818--841, 2016.
	
\end{thebibliography}
	
\end{document}